\newtheorem{theorem}{Theorem}
\newtheorem{proposition}[theorem]{Proposition}
\newtheorem{lemma}[theorem]{Lemma}
\newcommand{\R}{\mathbb{R}}
\newcommand{\E}{\mathcal{E}}
\newcommand{\M}{\mathcal{M}}
\newcommand{\varq}{\sigma}
\newcommand{\chevron}[1]{\langle #1 \rangle}
\newcommand{\norm}[1]{\left\lVert#1\right\rVert}
\newcommand{\paren}[1]{\left( #1 \right)}
\newcommand{\bracket}[1]{\left[ #1 \right]}
\newcommand{\abs}[1]{\left\lvert #1 \right\rvert}
\DeclareMathOperator{\supp}{supp}
\DeclareMathOperator{\trace}{tr}
\newcommand{\del}{\partial}
\newcommand{\grad}{\nabla}
\newcommand{\Laplace}{\Delta}
\newcommand{\ddt}{\frac{d}{dt}}
\renewcommand{\div}{\operatorname{div}}
\newcommand{\rest}{{\upharpoonright}}
\newcommand{\n}{^{-1}}
\newcommand{\indic}[1]{\chi_{\{#1\}}}
\newcounter{step_count}[section]
\newcommand{\step}[1]{\stepcounter{step_count} \smallskip \noindent{\bf Step \arabic{step_count}: #1}}
\title[De Giorgi techniques applied to Hamilton-Jacobi equations]{De Giorgi techniques applied to Hamilton-Jacobi equations with unbounded right-hand side}
\author[Stokols]{L. F. Stokols} 
\address[L. F. Stokols]{\newline Department of Mathematics, \newline The University of Texas at Austin, Austin, TX 78712, USA}
\email{lstokols@math.utexas.edu}
\author[Vasseur]{Alexis F. Vasseur}
\address[Alexis F. Vasseur]{\newline Department of Mathematics, \newline The University of Texas at Austin, Austin, TX 78712, USA}
\email{vasseur@math.utexas.edu}
\date{\today}
\subjclass[2010]{35G20,35B65} \keywords{Hamilton-Jacobi equation, H\"older regularity,  De Giorgi method}
\thanks{\textbf{Acknowledgment.} A. F. Vasseur was partially supported by the NSF Grant DMS 1614918. }
\begin{document}

\begin{abstract}
In this article we obtain H\"{o}lder estimates for solutions to second-order Hamilton-Jacobi equations with super-quadratic growth in the gradient and unbounded source term.  The estimates are uniform with respect to the smallness of the diffusion and the smoothness of the Hamiltonian.  Our work is in the spirit of a result by P. Cardaliaguet and L. Silvestre \cite{cs}.  We utilize De Giorgi's method, which was introduced to this class of equations in \cite{cv}.  
\end{abstract}

\maketitle \centerline{\date}

\section{Introduction}
In the present paper, we study $C^\gamma$ regularization in solutions to a Hamilton-Jacobi evolution equation with viscosity:
\[ \del_t u + H(x,u,\grad u) - \varepsilon \Laplace u = 0, \qquad (t,x)\in (0,T)\times\Omega, \]
where $\Lambda > 0$, $\varepsilon \in [0,\Lambda]$, $\Omega \subseteq \R^n$, and the Hamiltonian has superquadratic growth in the gradient variable, uniform in $x$ and $t$:
\[ \frac{1}{\Lambda} \abs{v}^p - f(x,t) \leq H(t,x,z,v) \leq \Lambda \abs{v}^p + \Lambda, \qquad p>2, f \in L^m, m > 1+ \frac{\max(n,2)}{p}.\]

We will show that solutions are uniformly H\"{o}lder continuous away from the boundary of $\Omega$ and after a positive time has elapsed.  

Because $p>2$, it is the first order term that will dominate at small scales.  The second order term acts merely as a perturbation.  In fact, although our motivation is a first-order Hamilton-Jacobi equation with viscosity, our techniques can handle much more general second order terms.  Specifically, we will show the following theorem.  
\begin{theorem}[Main Theorem] \label{th:main}
Let constants $\Lambda > 0$, $\Lambda_0 \geq 0$, $p>2$, $m > 1+\frac{\max(n,2)}{p}$ be given, and let $\Omega \subseteq \R^n$ open and $T>0$ be given, and let $f \in L^m([0,T]\times \Omega)$ with $\norm{f}_m \leq \Lambda$ and a matrix $A \in L^\infty([0,T]\times\Omega;\R^{n\times n})$ with $\norm{A}_\infty \leq \Lambda$ be given, and let $\bar{\Omega} \subset \Omega$ compact and $0 < s < T$ be given.  

There exists $0<\gamma<1$, depending on $p$, $\Lambda$, $\Lambda_0$, $m$, and $n$, such that any $u \in L^\infty((0,T)\times\Omega)$, $\grad u \in L^p$, satisfying 
\begin{equation} \label{eq:mainPDEdist}
\del_t u + \Lambda\n |\grad u|^p - \div (A \grad u) \leq f
\end{equation} 
in the sense of distributions, and satisfying 
\begin{equation} \label{eq:basePDEvisc}
\del_t u + \Lambda |\grad u|^p - \Lambda_0 m^-(D^2 u) \geq -\Lambda
\end{equation}
in the sense of viscosity, will have
\[ u \in C^\gamma((s,T)\times\bar{\Omega})\]
with norm depending on $\norm{u}_\infty$, $p$, $\Lambda$, $\Lambda_0$, $m$, $n$, $s$, and the distance between $\bar{\Omega}$ and $\R^n \setminus \Omega$.  
\end{theorem}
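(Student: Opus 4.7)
The plan is to run a De Giorgi iteration in the spirit of \cite{cv} and \cite{cs}. The argument consists of: (i) a parabolic rescaling, exploiting $p>2$ and $m>1+\max(n,2)/p$, which normalizes to a unit cylinder and turns the diffusion and source into arbitrarily small perturbations; (ii) an $L^2$-to-$L^\infty$ estimate (first De Giorgi lemma) derived from the distributional subsolution; (iii) an intermediate-value / isoperimetric lemma (second De Giorgi lemma) combining the distributional subsolution and the viscosity supersolution; (iv) oscillation decay by chaining the two lemmas, and a classical dyadic iteration to conclude $u\in C^\gamma$.

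\textbf{Rescaling and first lemma.} In a parabolic cylinder $Q_r=(-r^p,0)\times B_r(x_0)$, set $\tilde u(t,x) = r^{-\alpha}\, u(r^p t, r x + x_0)$ with a critical scaling exponent $\alpha=(p-2)/(p-1)$. The nonlinearity $|\grad u|^p$ is preserved, while $\div(A\grad u)$ and $f$ pick up strictly positive powers of $r$ (the former thanks to $p>2$, the latter thanks to $m > 1+\max(n,2)/p$). After choosing $r$ small one may assume $|u|\leq 1$ on $Q_1$ with the perturbation constants below any prescribed tolerance. For the first lemma, I would test the distributional subsolution against $(u-k)_+ \phi$ with a smooth spacetime cutoff $\phi$: the super-quadratic gradient term dominates and yields an energy inequality of the form
\[ \sup_t \int (u-k)_+^2\, \phi\, dx \;+\; \int\!\!\int |\grad (u-k)_+|^p\, \phi\, dx\, dt \;\leq\; C\int\!\!\int (u-k)_+^{2}\, dx\, dt + \text{small forcing}. \]
A level-set iteration with $k_n\nearrow 1/2$ then shows: $u\leq 1/2$ on $Q_{1/2}$ provided $\int_{Q_1}(u-1/4)_+^2$ lies below an explicit threshold.

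\textbf{Second lemma, conclusion, and main obstacle.} The second lemma is an isoperimetric statement of the form: if $u\leq 1$ on $Q_1$, $|\{u\leq 0\}\cap Q_1|\geq c_0$, and $|\{u\geq 1/2\}\cap Q_{1/2}|\geq c_1$, then $|\{0<u<1/2\}\cap Q_1|\geq c_2>0$. The viscosity supersolution bounds the rate at which $u$ can drop: propagated via an inf-convolution or by testing against paraboloids, it forbids instantaneous transitions between $\{u\leq 0\}$ and $\{u\geq 1/2\}$ and thereby forces an intermediate strip of positive measure. If the first lemma's hypothesis fails at every dyadic level $k_j=1-2^{-j}$, applying the second lemma at each level and summing the intermediate measures eventually exceeds $|Q_1|$, a contradiction. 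Hence at some level the first lemma applies and produces an oscillation reduction $\sup_{Q_{1/2}}u - \inf_{Q_{1/2}}u \leq 2(1-\theta)$; standard iteration then delivers the Hölder exponent. The main obstacle will be the second lemma: the distributional subsolution and the viscosity supersolution are not pointwise reconcilable, and one must couple the integral energy bounds with the pointwise viscosity inequality uniformly in $A$ (hence uniformly in the diffusion parameter $\varepsilon$), while the threshold $m>1+\max(n,2)/p$ should enter precisely to keep the forcing subcritical under the chosen scaling.
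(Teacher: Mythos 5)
There is a genuine gap, and it concerns the one-sided structure of the problem. Your oscillation-decay step is presented as ``standard,'' which implicitly assumes the classical De Giorgi symmetry: if $u$ is mostly positive, run the same argument on $-u$ to raise the infimum. But $-u$ satisfies neither \eqref{eq:mainPDEdist} nor \eqref{eq:basePDEvisc}, so the infimum cannot be raised by the same mechanism that lowers the supremum. The paper's resolution is to apply the energy/De Giorgi machinery to the \emph{time-reversed} function $-u(-t,x)$, which does satisfy \eqref{eq:mainPDEdist} but with $A$ replaced by $-A$ (harmless, because the energy inequality never uses positivity of $A$); the price is that the lower-bound improvement is then obtained only in an \emph{earlier} time range, and a separate argument --- an explicit barrier subsolution compared against the viscosity supersolution \eqref{eq:basePDEvisc} via the comparison principle (Proposition~\ref{th:flowforward}) --- is needed to ``flow'' that improvement forward to the final time interval. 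Your proposal has no counterpart to this step, and it uses the viscosity inequality in the wrong place: in the second lemma you invoke it to forbid ``instantaneous transitions,'' but the supersolution inequality only prevents $u$ from \emph{dropping} abruptly, whereas the transition that must be excluded there is $u_+$ jumping \emph{up} (from a mostly-zero slice to a set of measure $\delta_0$ where $u\geq 1$); that one-sided no-jump property comes from the distributional inequality (in the paper, via the energy inequality, Aubin--Lions compactness for $(u_i)_+^{\sigma+2}$, and the upper bound on $\frac{d}{dt}\int\phi^2 v$), not from viscosity. Without the time-reversal device and the flow-forward barrier, the oscillation does not decay on a parabolic neighborhood of the point, and H\"older continuity does not follow.

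Two further points would make the proof fail as written even on the subsolution side. First, your scaling $\tilde u(t,x)=r^{-\alpha}u(r^pt,rx+x_0)$ with $\alpha=(p-2)/(p-1)$ does not preserve the nonlinearity: carrying it through, the coefficient of $|\grad\tilde u|^p$ becomes $\Lambda^{-1}r^{\alpha(p-1)}=\Lambda^{-1}r^{p-2}\to 0$, i.e.\ it degenerates at exactly the same rate as the diffusion, so the coercive term that drives the whole iteration is lost at small scales; the invariant scaling is of the form $v(t,x)=\alpha\,u(\alpha^{p-1}\beta^p t,\beta x)$ (Lemma~\ref{th:scaling1}), which keeps the $|\grad v|^p$ coefficient fixed while shrinking $\Lambda_0$ and $\norm{f}_m$. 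Second, the energy inequality you write down is not what testing \eqref{eq:mainPDEdist} against $(u-k)_+\phi$ produces: the gradient term appears with the degenerate weight $(u-k)_+|\grad(u-k)_+|^p$, not $|\grad(u-k)_+|^p$, and handling this weight near the level set (in the paper, by testing with $\phi^2 u_*^b$ for a family of exponents $b$ and by the ``going up a level'' trick $u_k\geq 2^{-k-2}$ on $\{u_{k+1}>0\}$) is a substantive part of the argument, not a formality; it is also why the admissible range of $b$ and the threshold $m>1+\max(n,2)/p$ enter the nonlinear iteration exponents.
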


Here $m^-$ is a function that returns the lowest eigenvalue of a symmetric matrix, or 0 if all of the eigenvalues are positive.  For a function to solve Inequality~\eqref{eq:basePDEvisc} in the sense of viscosity means, following the definition of Barles~\cite{b13}, that the lower-semicontinuous envelope of that function is a viscosity supersolution of 
\[ \del_t u + \Lambda \abs{\grad u}^p - \Lambda_0 m^-(D^2 u) = -\Lambda. \]

Hamilton-Jacobi equations of this general form, with a viscosity term and polynomial growth in the gradient, were studied by Lasry and Lions \cite{ll89} in 1989, in connection with stochastic control problems.  For the case $p < 2$, this first-order-term can be viewed as a perturbation of a simple heat equation, and indeed solutions will be regular so long as the viscosity term is uniformly parabolic.  However, in the superquadratic case $p > 2$, it is the first order term which dominates at small scales, so standard parabolic theory does not apply.  

Schwab \cite{s} studied homogenization problems for Hamilton-Jacobi equations with superquadratic growth, which required him to prove that the regularity of solutions to these equations is independent of the regularity of the Hamiltonian.  His result still required, however, that the Hamiltonian be convex in $Du$.  It was Barles \cite{b} and Dolcetta, Leoni, and Porretta \cite{dlp} who noticed that convexity was unnecessary in the time-independent case, and Cardaliaguet (\cite{c}, \cite{cc}, \cite{cs}) for the time-dependent case.  

In the case that $f$ is bounded, Cardaliaguet and Silvestre (\cite{cs}, Theorem 1.2) showed H\"{o}lder continuity, using a second order term $m^+(D^2 u)$ instead of $\div(A \grad u)$ in \eqref{eq:mainPDEdist}.  In the case that $f$ is not assumed bounded, they could only show H\"{o}lder regularity with second order term $\trace(A D^2 u)$, $A \in C^1$.  Our result requires no regularity on $A$, at the expense of requiring that $\grad u \in L^p$ and $u$ solve Inequality~\eqref{eq:mainPDEdist} in the sense of distribution.  The motivation for considering $f$ unbounded is from Lasry and Lions \cite{ll07}.  

Most of the aforementioned results are proven by constructing super- and subsolutions.  In \cite{cv}, H\"{o}lder estimates are obtained, with $f$ bounded and no second order term, using a variation of De Giorgi's method.  The present work is a continuation of that project.  

The proof will proceed mostly along the same lines as De Giorgi \cite{dg} and \cite{cv}.  In the classical De Giorgi proof, in order to prove H\"{o}lder continuity one merely shows that if the function $u$ is ``mostly negative'' in some range of time, then the upper bound is improved in a later range of time.  If, alternatively, the function is not ``mostly negative,'' it must be ``mostly positive'' and hence one can apply the original argument to $-u$, improving the lower bound on $u$ in the same later range of time.  Either way, the $L^\infty$-bound of $u$ is improved in the later time range.  

In the sequel, the function $-u$ does not satisfy the same Inequality~\eqref{eq:mainPDEdist} as $u$.  However, time-reversed $-u$ does satisfy Inequality~\eqref{eq:mainPDEdist} with $A$ replaced by $-A$, since time reversal creates an extra minus sign on the $\del_t$ term. Thus unlike the classical De Giorgi proof, while the upper bound is improved in a later time range, the lower bound on $u$ is improved in an earlier time range, because time was reversed.  Note that while replacing $A$ by $-A$ should ostensibly cause great difficulty, the second order term is here a perturbation, and the first order term is the driver of regularization, so we can handle negative viscosities so long as the solution is known to exist and to be bounded.  

Next we must use the comparison principle in a small but crucial argument.  Based on Inequality~\eqref{eq:basePDEvisc}, a subsolution is constructed to show that a lower bound improvement in the early time range implies a smaller-but-still-positive improvement in the later time range.  This is referred to as ``flowing the improvement forward in time.''  

The key ingredient in improving the upper bound is an energy inequality.  Because of the second order term, we must multiply \eqref{eq:mainPDEdist} by $u_+$ to obtain the energy inequality (then we integrate by parts, and turn the second order term into a $|\grad u|^2$ term).  But the viscosity is a perturbation, and the true driver of the proof is the first order term.  Multiplying the first order term by $u_+$ yields $u_+ |\grad u_+|^p$, which is difficult because $u_+$ acts like a coefficient which is not bounded below.  Luckily, our goal is to bound $u$, and the difficulties only occur when $u_+$ is small.    

Section~\ref{sec:energy} derives an energy inequality, which quantifies the ellipticity of our equation.  Sections \ref{sec:DG1} and \ref{sec:DG2}  use the energy inequalities to prove De Giorgi's two lemmas.  Section~\ref{sec:flowforward} demonstrates how to flow the improvement forward in time, correcting for the necessary time reversal.  Finally, in Section~\ref{sec:main} we combine these lemmas to prove H\"{o}lder continuity.  A reader unfamiliar with De Giorgi-style proofs might want to begin with Section~\ref{sec:main}, lest the former sections seem unmotivated.  

Instead of proving continuity directly for $u$, it is preferable to consider
\[ \bar{u} := u + \Lambda t, \qquad \bar{f} := f + \Lambda\]
which satisfies the inequality
\begin{align}
\del_t \bar{u} + \Lambda |\grad \bar{u}|^p - \Lambda_0 m^-(D^2\bar{u}) &\geq 0. \label{eq:mainPDEvisc}
\end{align}

Note also that, by scaling our solution appropriately, we can assume that $\Lambda_0$ is arbitrarily small.  

Throughout this article, $C$ will indicate a constant which varies from line to line.  No two instances of the symbol should be assumed related to each other.

\section{The Energy Inequalities} \label{sec:energy}

\newcommand{\upl}{u_*}

We begin by deriving the Energy Inequalities, which play an analogous role to the Cacciopoli inequality in De Giorgi's original paper.  These inequalities serve to quantify the coercivity of the PDE in question.  We actually consider an infinite family of Energy Inequalities, corresponding to different entropies, indexed by the parameter $b$.  These inequalities must be valid even for non-positive matrices $A$.  

The lemma below claims three different forms for the Energy Inequality.  The first form will be used to compare distinct truncations of a solution in Section~\ref{sec:DG1}.  The second and third forms are only valid for large values of $b$, the former being used in Section~\ref{sec:DG1} and the latter being used in Section~\ref{sec:DG2}.  Notice that the gradient of $u$ appears in the right hand side of the first form, but not of the second or third forms.  

\begin{lemma}[Energy Inequality] \label{th:Energy} \label{th:alt_Energy}
Given $u$ verifying Inequality~\eqref{eq:mainPDEdist}, with $\norm{A}_{\infty},\norm{f}_m \leq \Lambda$, on some domain $[S,0]\times\Omega$, given constants $b$, $c$ and $S < T < 0$, and given $\phi$ a smooth non-negative function constant in time and compactly supported in $\Omega$, and defining $\upl = (u-c)_+$, then $\upl$ satisfies the inequality
%
\begin{equation} \begin{gathered} \label{energy_1}
\sup_{t \in [T,0]} \int \phi^2 \upl^{b+1}(t) + \iint_T^0 \phi^2 \upl^b \abs{\grad \upl}^p \\
\leq C(\Lambda, b) \paren{1+\frac{1}{T-S}} \paren{\norm{\phi}_\infty^2 + \norm{\grad \phi}_\infty^2} \bracket{ \iint_S^0 (\upl^{b+1} + \upl^{b-1}|\grad \upl|^2) \indic{\phi} + \paren{\iint_S^0 \upl^{b m^\ast} \indic{\phi}}^\frac{1}{m^\ast} }. 
\end{gathered} \end{equation}

Moreover, if $b > \sigma := \paren{1-\frac{2}{p}}\n$, then
\begin{equation} \begin{gathered} \label{energy_2}
\sup_{t \in [T,0]} \int \phi^2 \upl^{b+1}(t) + \iint_T^0 \phi^2 \upl^b \abs{\grad \upl}^p \\
\leq C(\Lambda, b) \paren{1+\frac{1}{T-S}} \paren{\norm{\phi}_\infty^2 + \norm{\grad \phi}_\infty^2} \bracket{ \iint_S^0 (\upl^{b+1} + \upl^{b-\sigma}) \indic{\phi} + \paren{\iint_S^0 \upl^{b m^\ast} \indic{\phi}}^\frac{1}{m^\ast} }.
\end{gathered} \end{equation}

If $b > \sigma$ but $\phi$ is not necessarily constant in time, then still we have
\begin{equation} \begin{gathered} \label{energy_3}
\chevron{\del_t(\upl^{b+1}),\phi^2}_{[S,0]\times\Omega} + \iint_{S}^0 \phi^2 \upl^b \abs{\grad \upl}^p \\
\leq C(\Lambda,b) \paren{ \iint_{S}^0 \phi^2 \upl^b f + \iint_{S}^0 \upl^{b+1} |\grad \phi|^2 + \iint_{S}^0 \phi^2 \upl^{b-\sigma} }. 
\end{gathered} \end{equation}

The integrals without limits are over all of $\Omega$, $\indic{\phi}$ means the indicator function of the support of $\phi$, and $m^\ast$ means the H\"{o}lder conjugate of $m$.
\end{lemma}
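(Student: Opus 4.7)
The plan is to test the distributional inequality~\eqref{eq:mainPDEdist} against the entropy-weighted cutoff $\eta^2 \phi^2 u_*^b$ (for forms~\eqref{energy_1} and~\eqref{energy_2}) or $\phi^2 u_*^b$ (for form~\eqref{energy_3}), where $\eta(t)$ is a smooth temporal cutoff vanishing at $t=S$, equal to $1$ on $[T,0]$, and satisfying $|\eta'|\leq C/(T-S)$. Since $u_*^b$ is not smooth in general, this requires a routine mollification of $u$ followed by passage to the limit. After integration by parts, the time-derivative term produces $\frac{1}{b+1}\int\phi^2 u_*^{b+1}(0)$ plus a remainder $\frac{C}{T-S}\iint\phi^2 u_*^{b+1}\indic{\phi}$; the superquadratic term gives the good energy $\Lambda\n \iint \eta^2\phi^2 u_*^b |\grad u_*|^p$; and the divergence term, integrated by parts and using $\grad u=\grad u_*$ on $\{u>c\}$, yields
\[ b\iint A\grad u_*\cdot\grad u_*\, \eta^2 \phi^2 u_*^{b-1} + 2\iint A\grad u_*\cdot\grad\phi\, \eta^2 \phi u_*^b. \]
The source term is controlled via H\"older with exponents $m$ and $m^*$, using $\norm{f}_m\leq\Lambda$, to yield $C\norm{\phi}_\infty^2\paren{\iint u_*^{bm^*}\indic{\phi}}^{1/m^*}$. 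To obtain $\sup_{t\in[T,0]}\int\phi^2 u_*^{b+1}$ on the left-hand side, one repeats the argument with $t=0$ replaced by an arbitrary $t_0\in[T,0]$.

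The principal obstacle is that $A$ is not assumed positive-semi-definite, since the paper needs to accommodate $-A$ after time reversal, so the residual $b\iint A\grad u_*\cdot\grad u_*\, \eta^2\phi^2 u_*^{b-1}$ has no definite sign. We bound it in absolute value by $b\Lambda\iint\phi^2 u_*^{b-1}|\grad u_*|^2$, and apply Young's inequality on the cross term to extract $\frac{1}{2}\phi^2 u_*^{b-1}|\grad u_*|^2+\frac{1}{2} u_*^{b+1}|\grad\phi|^2$. This produces form~\eqref{energy_1}, with the indefinite-sign energy $\iint u_*^{b-1}|\grad u_*|^2\indic{\phi}$ retained on the right-hand side.

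To sharpen to forms~\eqref{energy_2} and~\eqref{energy_3} when $b>\sigma = p/(p-2)$, the decisive observation is that the sub-quadratic residual interpolates against the supercritical $p$-energy. By Young's inequality with exponents $p/2$ and $p/(p-2)$,
\[ u_*^{b-1}|\grad u_*|^2 = \bigl(u_*^b|\grad u_*|^p\bigr)^{2/p}\, u_*^{b-1-2b/p} \leq \varepsilon\, u_*^b|\grad u_*|^p + C_\varepsilon\, u_*^{b-\sigma}, \]
since $(b-1-2b/p)\cdot p/(p-2)=b-\sigma$. Both exponents are admissible because $p>2$, and the requirement $b>\sigma$ guarantees $u_*^{b-\sigma}$ decays rather than blows up as $u_*\to 0$. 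Absorbing $\varepsilon u_*^b|\grad u_*|^p$ into the left-hand side yields form~\eqref{energy_2}. Form~\eqref{energy_3} follows from the same test-function construction without the temporal cutoff $\eta$, so the time derivative appears as the distributional pairing $\chevron{\del_t u_*^{b+1},\phi^2}$ and $\phi$ may depend on $t$ (contributing the spatial $|\grad\phi|^2 u_*^{b+1}$ term as before), and without invoking H\"older on the source term.
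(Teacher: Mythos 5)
Your proposal is correct and follows essentially the same route as the paper: test the distributional inequality against $\phi^2 u_*^b$, control the sign-indefinite $A$-term by Cauchy--Schwarz/Young, absorb the quadratic gradient term via Young's inequality with exponents $p/2$ and $\sigma$ when $b>\sigma$, and apply H\"older with exponents $m$, $m^*$ to the source term. The only cosmetic difference is how the $\frac{1}{T-S}$ factor arises: you use a temporal cutoff $\eta$ vanishing at $S$ with $|\eta'|\leq C/(T-S)$, while the paper integrates from an arbitrary starting time $s\in[S,T]$ and averages over $s$; the two devices are interchangeable here.
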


\begin{proof}
Formally, we want to integrate Inequality~\eqref{eq:mainPDEdist} against the test function $\phi^2 \upl^b$.  Because our solution $u$ is by assumption in $L^p(W^{1,p})$, the distributions $|\grad u|^p$ and $\div(A \grad u)$ both have enough regularity for this integration to make sense.  To justify our calculations on $\del_t u$, one can simply use the test function $\tau \ast (\phi^2 (\tau \ast \upl)^b)$ for $\tau$ some approximation to the identity and $\ast$ meaning convolution in time and space, though for reasons of clarity we drop the mollifiers in the formal calculations below.  

Multiply Inequality~\eqref{eq:mainPDEdist} by $\phi^2\upl^b$, then integrate over all of space $\Omega$:
\[\int \phi^2 \upl^b \del_t u + \Lambda\n \int \phi^2 \upl^b \abs{\grad u}^p + \int (\grad (\phi^2 \upl^b))A (\grad u) \leq \int \phi^2 \upl^b f. \] 
Notice that $D\upl = \indic{\upl > 0} Du$ for any first order differential operator $D$, so in the above expression we may replace every instance of $u$ with $\upl$.  By the product rule, $(b+1) \upl^b \del_t \upl = \del_t(\upl^{b+1})$.  Also, we can use the product rule and Young's Inequality to bound the $A$-term:
\begin{align*}
\grad\paren{\phi^2 \upl^b} A \grad \upl &= b \phi^2 \upl^{b-1} (\grad\upl A \grad\upl) + 2 \phi \upl^b (\grad \upl A \grad \phi)
\\ &\leq b \Lambda \phi^2 \upl^{b-1} |\grad \upl|^2 + 2 \Lambda \paren{ \phi \upl^\frac{b-1}{2}|\grad \upl|} \paren{ \upl^\frac{b+1}{2} |\grad \phi|} 
\\ &\leq  b \Lambda \phi^2 \upl^{b-1} |\grad \upl|^2 + \Lambda \paren{ \phi \upl^\frac{b-1}{2}\grad \upl}^2 + \Lambda \paren{ \upl^\frac{b+1}{2} \grad \phi}^2
\\ &= (b+1) \Lambda \phi^2 \upl^{b-1} |\grad \upl|^2 + \Lambda \upl^{b+1} |\grad \phi|^2.
\end{align*}
Putting all of these together, we arrive at
\[ \frac{1}{b+1} \int \phi^2 \del_t(\upl^{b+1}) + \Lambda\n \int \phi^2 \upl^b \abs{\grad \upl}^p \leq \int \phi^2 \upl^b f + \Lambda \int \upl^{b+1} |\grad \phi|^2 + (b+1)\Lambda \int \phi^2 \upl^{b-1} |\grad \upl|^2. \]

If $b > \sigma$, then using Young's Inequality with exponents $p/2$ and $\sigma$, and a small constant $\eta$, we can break up the final term of the above inequality:
\begin{align*} 
\upl^{b-1}|\grad \upl|^2 &\leq C(p) \paren{ \paren{ \eta \upl^\frac{2b}{p} |\grad \upl|^2}^{p/2} + \paren{\frac{1}{\eta} \upl^{b\paren{1-\frac{2}{p}}-1}}^\sigma }
\\ &\leq C(p) \paren{ \eta^\frac{p}{2} \upl^b |\grad \upl|^p + \eta^{-\varq} \upl^{b-\varq} }. 
\end{align*}
By taking $\eta$ sufficiently small (depending on $p$, $b$, $\Lambda$), the $\upl^b|\grad \upl|^p$ term on the right can be absorbed by the same term with larger constant on the left.  We use the shorthand 
\[ T(\upl,b) := \begin{cases} 
      \upl^{b-1} |\grad \upl|^2 & \textrm{ if $b \leq \sigma$} \\
      \upl^{b-\sigma} & \textrm{ if $b > \sigma$} \\
   \end{cases} \]
and write
\[ \int \phi^2 \del_t(\upl^{b+1}) + \int \phi^2 \upl^b \abs{\grad \upl}^p \leq C(\Lambda,b) \paren{ \int \phi^2 \upl^b f + \int \upl^{b+1} |\grad \phi|^2 + \int \phi^2 T(\upl,b) }. \]

In the case that $\phi$ is time dependent, we can integrate the above in time to obtain \eqref{energy_3}.  From now on, we assume that $\del_t \phi = 0$, and hence $\int \phi^2 \del_t(\upl^{b+1}) = \ddt \int \phi^2 \upl^{b+1}$.  

For any times $s,t$ satisfying $S \leq s \leq T \leq t \leq 0$, we can integrate the above inequality over $[s,t]$ (and apply H\"{o}lder's to remove  dependence on $f$): 
\begin{gather*} 
\int \phi^2 \upl^{b+1}(t) + \iint_s^t \phi^2 \upl^b \abs{\grad \upl}^p \\
\leq C(\Lambda,b) \paren{ \int \phi^2 \upl^{b+1}(s) + \paren{ \iint_s^t (\phi^2 \upl^b)^{m^\ast} }^\frac{1}{m^\ast} + \iint_s^t \upl^{b+1} \abs{\grad \phi}^2 + \iint_s^t \phi^2 T(\upl,b)}. 
\end{gather*}
Due to our choice of $s,t$, the above inequality implies that
\begin{gather*}
 \int \phi^2 \upl^{b+1}(t) + \iint_T^t \phi^2 \upl^b \abs{\grad \upl}^p \\
 \leq C(\Lambda,b) \paren{ \int \phi^2 \upl^{b+1}(s) + \paren{ \iint_S^0 (\phi^2 \upl^b)^{m^\ast} }^\frac{1}{m^\ast} + \iint_S^0 \upl^{b+1} \abs{\grad \phi}^2 + \iint_S^0 \phi^2 T(\upl,b) }. 
\end{gather*}
Since the right side is independent of $t$, we can take a supremum of the left side over $T \leq t \leq 0$.  Add to this the inequality with $t=0$ to obtain
\begin{gather*}
\sup_{t\in[T,0]} \int \phi^2 \upl^{b+1}(t) + \iint_T^0 \phi^2 \upl^b \abs{\grad \upl}^p \\
\leq C(\Lambda,b) \paren{ \int \phi^2 \upl^{b+1}(s) + \paren{ \iint_S^0 (\phi^2 \upl^b)^{m^\ast} }^\frac{1}{m^\ast} + \iint_S^0 \upl^{b+1} \abs{\grad \phi}^2 + \iint_S^0 \phi^2 T(\upl,b) }.
\end{gather*}
Lastly, since this inequality holds for all $S \leq s \leq T$, it also holds if we average the right hand side over all values of $s$ in that range,
\begin{gather*}
\sup_{t \in [T,0]} \int \phi^2 \upl^{b+1}(t) + \iint_T^0 \phi^2 \upl^b \abs{\grad \upl}^p \\
\leq  C(\Lambda,b) \paren{ \frac{1}{T\!\! - \!\! S} \iint_S^T \phi^2 \upl^{b+1} + \paren{ \iint_S^0 (\phi^2 \upl^b)^{m^\ast} }^\frac{1}{m^\ast} + \iint_S^0 \upl^{b+1} \abs{\grad \phi}^2 + \iint_S^0 \phi^2 T(\upl,b) }.
\end{gather*}

From here the result follows naturally.  
\end{proof}

\section{De Giorgi's first lemma}\label{sec:DG1}

Now we present De Giorgi's first lemma.  
If we define
\[ \overline{Q}_2 := [-2,0]\times B_2, \qquad Q_1 := [-1,0]\times B_1,\]  
this lemma tells us that the supremum in $Q_1$ of solutions to \eqref{eq:mainPDEdist} can be controlled by the measure of $\{u > 0\}$ in $\overline{Q}_2$.  

\begin{proposition}[De Giorgi's First Lemma] \label{th:DG1}
There exists a constant $\delta_0 > 0$ depending only on $\Lambda$, $p$, $m$, and the dimension such that, for any $u$ satisfying Inequality~\eqref{eq:mainPDEdist} on $\overline{Q}_2$ in the sense of distributions, the following implication holds:

If 
\[ u(t,x) \leq 1 \qquad \forall\, (t,x) \in \overline{Q}_2 \]
and
\[ \big|\{u > 0\} \cap \overline{Q}_2 \big| \leq \delta_0,\] 
%
then 
\[ u(t,x) \leq \frac{1}{2} \qquad \forall \, (t,x) \in Q_1.\]


\end{proposition}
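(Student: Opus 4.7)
My plan follows the classical De Giorgi nonlinear iteration scheme, adapted to the energy estimate of Lemma~\ref{th:Energy}. Introduce increasing truncation levels $c_k = \frac{1}{2}(1 - 2^{-k})$ rising from $0$ to $\frac{1}{2}$, parabolic cylinders $Q_k = [-(1+2^{-k}), 0] \times B_{1+2^{-k}}$ shrinking from $\overline{Q}_2$ to $Q_1$, and smooth spatial cutoffs $\phi_k \equiv 1$ on $B_{r_{k+1}}$, $\supp \phi_k \subset B_{r_k}$, with $\abs{\grad \phi_k} \leq C \cdot 2^k$. Set $u_k := (u - c_k)_+$, so $0 \leq u_k \leq 1$ and $\{u_{k+1} > 0\} = \{u_k > 2^{-(k+2)}\}$. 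Fix a large parameter $b > \varq$, to be chosen, and define the energy
\[
U_k := \sup_{t \in [T_k,0]} \int \phi_k^2 u_k^{b+1}(t)\, dx + \iint_{Q_k} \phi_k^2 u_k^b \abs{\grad u_k}^p \, dx \, dt.
\]
The goal is to establish the nonlinear recursion $U_{k+1} \leq C^k U_k^{1+\alpha}$ for some $\alpha > 0$; a standard induction will then show that if $U_0 \leq \delta_0$ is sufficiently small, $U_k \to 0$, which forces $u \leq \frac{1}{2}$ on $Q_1$.

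To produce the recursion, I apply Energy Inequality~\eqref{energy_2} at truncation level $k+1$ using the cutoff $\phi_k$ on the interval $(S,T) = (T_k, T_{k+1})$, so that $(T-S)^{-1} = 2^{k+1}$. Since $u_{k+1} \leq 1$ pointwise, each of the three integrands on the right side of~\eqref{energy_2} is bounded by the indicator of $A_{k+1} := \{u_{k+1} > 0\} \cap Q_k$; Chebyshev gives $\abs{A_{k+1}} \leq 2^{Ck} U_k$. A crude use of these bounds produces only a sublinear recursion of the form $U_{k+1} \leq C^k U_k^{1/m^\ast}$, which is insufficient. To upgrade this to a strict superlinear gain, I invoke a parabolic Sobolev embedding for $v_k := u_k^{(b+p)/p}$: the identities $\abs{\grad v_k}^p = C u_k^b \abs{\grad u_k}^p$ and $v_k^{p(b+1)/(b+p)} = u_k^{b+1}$, combined with Gagliardo--Nirenberg, show that $u_k \in L^Q(Q_k)$ for some exponent $Q$ strictly exceeding $b m^\ast$ (for $b$ large), with norm controlled by a nontrivial power of $U_k$. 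H\"older interpolation between this improved integrability and the smallness of $\abs{A_{k+1}}$ then converts each of the three right-hand side terms into one of the form $C^k U_k^{1+\alpha}$ with $\alpha > 0$.

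The delicate step is choosing $b$ large enough so that, simultaneously, $b > \varq$ (to invoke~\eqref{energy_2}), the Sobolev exponent exceeds $b m^\ast$, and the resulting exponent $\alpha$ on $U_k$ is genuinely positive. Tracing the constants, the tightest of the three contributions is the one from $(\iint u_{k+1}^{b m^\ast})^{1/m^\ast}$, whose exponent after interpolation tends to $\frac{1}{m^\ast} + \frac{p}{n+p}$ in the large-$b$ limit. Requiring this to exceed $1$ is equivalent to $m > 1 + \frac{n}{p}$, which is exactly our hypothesis on $f$ (the $\max(n,2)$ refinement arises only in low dimension, ensuring that $b$ can also be chosen compatibly with $\varq$ and with the Sobolev scaling). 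The superquadratic exponent $p > 2$ is essential here: only when $p$ exceeds the $L^2$ scale of the perturbative viscosity does the $L^p$-gradient control from the energy inequality produce a genuine Sobolev gain, closing the iteration.
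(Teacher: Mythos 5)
Your scheme is correct in outline and rests on the same pillars as the paper's proof of Proposition~\ref{th:DG1} --- the Energy Inequality of Lemma~\ref{th:Energy}, Chebyshev at successive truncation levels, and a parabolic Sobolev gain that upgrades the sublinear bound $U_{k+1}\leq C^k U_k^{1/m^\ast}$ to a superlinear recursion --- but the implementation is genuinely different. The paper works with the quadratic entropy $b=1$, so it must use form \eqref{energy_1}, whose right-hand side retains a $|\grad u_k|^2$ term; that term is handled by a separate H\"older/Chebyshev estimate against the $p$-energy of an earlier level, yielding a two-step recursion $\E_k\leq C^k\E_{k-2}^{1+\varepsilon}$. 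The degenerate weight in $\iint \phi_k^2 u_k|\grad u_k|^p$ is removed there by the ``up a level'' observation $u_k\geq 2^{-(k+2)}$ on $\{u_{k+1}>0\}$, and the integrability gain comes from Sobolev applied to $u_{k+1}^{1+\beta}$ combined with Riesz--Thorin. You instead fix one large $b>\varq$, use \eqref{energy_2} (no gradient on the right), and absorb the weight through the substitution $v_k=u_k^{(b+p)/p}$ plus Gagliardo--Nirenberg; your limiting exponent $\frac{1}{m^\ast}+\frac{p}{n+p}>1\iff m>1+\frac{n}{p}$ is the correct bookkeeping for $2<p<n$, and the gain is only better when $p\geq n$ or $n=1$, so the iteration closes under the stated hypothesis. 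Your route buys a cleaner energy (a single entropy, no gradient term on the right-hand side); the paper's buys exact, finite exponents rather than a large-$b$ asymptotic argument.

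Two loose ends should be tied off, though neither threatens the argument. First, the hypothesis is $|\{u>0\}\cap\overline{Q}_2|\leq\delta_0$, not smallness of $U_0$: you need one preliminary application of \eqref{energy_2}, using $u\leq 1$, to convert the measure bound into $U_1\leq C\paren{\delta_0+\delta_0^{1/m^\ast}}$, exactly as the paper does for $\E_1$; note that $U_0$ itself cannot be estimated this way, since its cutoff and time interval exhaust $\overline{Q}_2$ and the energy inequality needs room in space and time, so the induction must start at $U_1$. Second, watch the cutoff indexing: if you run \eqref{energy_2} at level $k+1$ with the cutoff $\phi_k$, the right-hand side is supported on $\supp\phi_k$, where the localized $L^Q$ bound furnished by $U_k$ (valid only where $\phi_k\equiv 1$) does not apply; either use $\phi_{k+1}$ in the energy inequality and interpolate against the level-$k$ quantities, or accept a two-step recursion in the style of the paper.
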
 

De Giorgi's first lemma is proved by cutting off $u$ at larger and larger values, and showing that as the cutoff value tends to $1/2$, some Lebesgue norm of the remainder tends to zero.  

\begin{proof}
Let us specify the sequence of cutoffs.  We'll consider 
\begin{itemize}
\item heights $C_k = \frac{1}{2}-2^{-k-1}$ from $C_0 = 0$ to $C_\infty = \frac{1}{2}$ with $C_k - C_{k-1} = 2^{-k-1}$; 
\item functions $u_k = \max(u-C_k,0)$ from $u_0 = u_+$ to $u_\infty = (u-\frac{1}{2})_+$;
\item balls $B^k$ of radius $1 + 2^{-k}$ from $B^0 = B_2 = \{x: |x|<2\}$ to $B^\infty = B_1 = \{x:|x|<1\}$; 
\item times $T_k = -1-2^{-k}$ from $T_0 = -2$ to $T_\infty = -1$ with $T_k - T_{k-1} = 2^{-k}$; 
\item and smooth functions $\phi_k$ such that $\supp(\phi_k) = B^k$ and $\phi_k \rest B^{k+1} \equiv 1$, with $0 \leq \phi_k \leq 1$ and $\abs{\grad \phi_k} \leq 2^{k+2}$.  
\end{itemize}

Define the "energy" of the $k^\textrm{th}$ level to be
\[ \E_k := \sup_{t \in [T_{k+1},0]} \int (\phi_k u_k)^2(t) + \iint_{k+1} \phi_k^2 u_k \abs{\grad u_k}^p,\]  
where $\iint_k$ means $\int_{T_k}^0 \int_{\R^n}$.  First we will show via Sobolev's inequality that this energy term controls some $L^{(1+\beta)q}$ norm of $\phi_k u_k$.  Then we will show via the Energy Inequality that the same $L^{(1+\beta)q}$ norm controls this energy term.  

\step{Controlling $L^{(1+\beta)q}$ using $\E_k$}

Before we can apply Sobolev's inequality, we have to deal with the inhomogeneity of the gradient term.  We do this by "going up a level" from $u_k$ to $u_{k+1}$.  
\begin{align*}
\E_k &\geq \iint_{k+1} \phi_k^2 u_k \abs{\grad u_k}^p 
\\ &\geq \iint_{k+1} \phi_k^2 \left[ 2^{-(k+2)} \indic{u_k \geq 2^{-k-2}} \right] \abs{\grad u_k}^p
\\ &= 2^{-k-2}\iint_{k+1} \phi_k^2 \indic{u_{k+1} \geq 0} \abs{\grad u_k}^p
\\ &= 2^{-k-2}\iint_{k+1} \phi_k^2 \abs{\grad u_{k+1}}^p
\\&\geq 2^{-k-2}\iint_{k+1} \indic{B^{k+1}} \abs{\grad u_{k+1}}^p
\\&= C^{-k} \int_{T_{k+1}}^0 \norm{\grad u_{k+1}}_{L^p(B^{k+1})}^p
\\&= C^{-k} \norm{\grad u_{k+1}}_{L^p([T_{k+1},0];L^p(B^{k+1}))}^p
\end{align*}

We introduce now a parameter $\beta \in (0,1]$, satisfying
\begin{equation*} 0 < \frac{1}{n} - \frac{\beta}{2} < \frac{1}{p}, \qquad n \geq 2 \end{equation*}
or $\beta=1$ if $n=1$.  We are going to apply Sobolev's Inequality to bound the $L^{p'}$ norm of $u_k^{1+\beta}$ by some Lebesgue norm of $\grad u_k^{1+\beta}$.  

Since 
\[ \norm{ u_{k+1}^\beta }_{L^\infty([T_{k+1},0];L^{2/\beta}(B^{k+1}))}^{2/\beta} = \sup_{t \in [T_{k+1},0]}  \norm{ u_{k+1}(t) }_{L^2(B^{k+1})}^2 \leq \sup_{t \in [T_{k+1},0]} \norm{\phi_k u_k(t)}_{L^2(B^{k+1})}^2 \leq \E_k, \]
we know by elementary properties of Lebesgue spaces that
\begin{align} \begin{split} \label{grad_1+beta_bdd}
\int_{T_{k+1}}^0 \norm{\grad u_{k+1}^{\beta+1}}_{L^\frac{2p}{2+p\beta}(B^{k+1})}^p &= \norm{ u_{k+1}^\beta \grad u_{k+1}}_{L^p([T_{k+1},0];L^\frac{2p}{2+p\beta}(B^{k+1}))}^p
\\ &\leq  \norm{ u_{k+1}^\beta }_{L^\infty([T_{k+1},0];L^{2/\beta}(B^{k+1}))}^p \norm{\grad u_{k+1}}_{L^p([T_{k+1},0];L^p(B^{k+1}))}^p
\\ &\leq \paren{\E_k^{\beta/2}}^p C^k \E_k  = C^k \E_k^{1 + \frac{p\beta}{2}}. 
\end{split} \end{align}

If $n > 1$, then let $\frac{1}{p'} = \frac{2+p\beta}{2p} - \frac{1}{n} = \frac{\beta}{2} + \frac{1}{p} - \frac{1}{n}$.  If $n=1$, then take $p' = p$ (which renders some of the following calculations trivial).  
Sobolev Embedding yields
\begin{align*}
\norm{u_{k+1}^{1+\beta}}_{L^{p'}\left(B^{k+1}\right)} &\leq \norm{u_{k+1}^{1+\beta} - \strokedint_{B^{k+1}} u_{k+1}^{1+\beta} }_{L^{p'}\left(B^{k+1}\right)} + |B^{k+1}|^{\frac{1}{p'}-1} \int_{B^{k+1}} u_{k+1}^{1+\beta}
\\ &\leq C \paren{ \norm{\grad u_{k+1}^{1+\beta}}_{L^\frac{2p}{2+p\beta}\left(B^{k+1}\right)} + \norm{u_{k+1}}_{L^2\left(B^{k+1}\right)}^{1+\beta} }.
\end{align*}
Remember that $\strokedint_E := \frac{1}{|E|}\int_E$, and $1+\beta \leq 2$ so $L^{1+\beta} \subseteq L^2$.  

With the above calculation and \eqref{grad_1+beta_bdd}, we can estimate
\begin{align*} 
\int_{T_{k+1}}^0 \norm{u_{k+1}^{1+\beta}}_{L^{p'}(B_{k+1})}^p &\leq C \int_{T_{k+1}}^0 \paren{\norm{\grad u_{k+1}^{1+\beta}}_{L^\frac{2p}{2+p\beta}(B^{k+1})} + \norm{u_{k+1}}_{L^2(B^{k+1})}^{1+\beta}}^p
\\ &\leq C \paren{ \int_{T_{k+1}}^0 \norm{\grad u_{k+1}^{1+\beta}}_{L^\frac{2p}{2+p\beta}(B^{k+1})}^p + T_{k+1} \sup_{t\in [T_{k+1},0]} \norm{u_{k+1}(t)}_{L^2(B^{k+1})}^{p(1+\beta)}}
\\ &\leq C \paren{C^k \E_k^{1+\frac{p\beta}{2}} + \E_k^{p\frac{1+\beta}{2}}}
\\ &\leq C^k \E_k^{1 + \frac{p\beta}{2}}.
\end{align*}
This last estimate holds as long as $\E_k$ is less than one.  

We wish to apply the Riesz-Thorin theorem to interpolate between the $L^p(L^{p'})$ and $L^\infty(L^\frac{2}{1+\beta})$ norms of $u_{k+1}^{1+\beta}$.  First define
\begin{equation}\label{def_of_q} q = p + \paren{1 - \frac{p}{p'}}\frac{2}{1+\beta}. \end{equation}
Because $p' \geq p$ and hence $q \geq p$, we can let $\theta = \frac{p}{q} \in [0,1]$ and interpolate to obtain
\[ \paren{1-\theta} \frac{1}{\infty} + \theta \frac{1}{p} 
= 0 + \frac{1}{q} 
= \frac{1}{q} \]
and
\begin{align*} 
\paren{1-\theta} \frac{1+\beta}{2} + \theta \, \frac{1}{p'} 
&= \paren{\frac{q-p}{q}} \frac{1+\beta}{2} + \frac{1}{q} \paren{\frac{p}{p'}} 
\\ &= \frac{1}{q} \paren{1 -  \frac{p}{p'}} \paren{\frac{2}{1+\beta}} \frac{1+\beta}{2} + \paren{\frac{p}{p'}} \frac{1}{q}.
\\ &= \frac{1}{q}. 
\end{align*}

Therefore the Riesz-Thorin interpolation theorem yields
\begin{align*}
\norm{u_{k+1}^{1+\beta}}_{L^q\left([T_{k+1},0]\times B^{k+1}\right)} &\leq C \left[ \norm{ u_{k+1}^{1+\beta} }_{L^\infty\left([T_{k+1},0];L^\frac{2}{1+\beta}(B^{k+1})\right)} \right]^{1 - \theta} \left[ \norm{ u_{k+1}^{1+\beta} }_{L^p\left([T_{k+1},0];L^{p'}(B^{k+1})\right)} \right]^\theta
\\ &\leq C \left[ \sup_{t \in [T_{k+1},0]} \norm{\phi_k u_k}_{L^2(B^k)}^{1+\beta} \right]^{1-\frac{p}{q}} \left[ \paren{C^k\E_k^{1 + \frac{p\beta}{2}}}^{1/p} \right]^\frac{p}{q} 
\\ &\leq C^k \left[\E_k^{\frac{1}{2} + \frac{\beta}{2}} \right]^{1-\frac{p}{q}} \E_k^{\frac{1}{q} + \frac{\beta}{2} \cdot \frac{p}{q}}
\\ &= C^k \E_k^{\frac{1}{q} + \frac{1}{2}\paren{1+\beta-\frac{p}{q}}}. 
\end{align*}

Thus finally, 
\begin{equation} \label{q_leq_E}
\iint_{k+1} \abs{\phi_{k+1}u_{k+1}}^{(1+\beta)q} \leq \iint_{k+1} \indic{B^{k+1}} (u_{k+1}^{1+\beta})^q \leq C^k \E_k^{1 + \frac{(1+\beta)q-p}{2}}.
\end{equation}

\step{A Recursive relation for the sequence $\E_k$}

%

Recall from the definition~\eqref{def_of_q} of $q$ that $(1+\beta)q = 2 + (1+\beta)p - 2\frac{p}{p'}$.  If $n > 1$, then by the definition of $p'$ we have that $2\frac{p}{p'} = 2 + p\beta - 2 \frac{p}{n}$.  
If $n=1$, then $p' = p$ and $\beta = 1$.
Therefore,
\begin{align} \begin{split} \label{useful_q_facts}
(1+\beta)q &= p + 2 \frac{p}{n}, \qquad n > 1 \\
(1+\beta)q &= 2p, \qquad \qquad n = 1.
\end{split} \end{align}


The Energy Inequality~\eqref{energy_1}, applied to $u_k$ with $b=1$, $\phi_k$, and times $T_{k+1}$ and $T_k$, tells us that
\begin{equation} \label{energy_for_dg1} \E_k \leq C 2^{k+2} \paren{ \iint_k (u_k^2 + |\grad u_k|^2) \indic{B^k} + \paren{ \iint_k u_k^{m^\ast} \indic{B^k} }^{1/m^\ast} }. \end{equation}
Now that we have \eqref{q_leq_E}, we are ready to bound the three terms on this inequality's right hand side.  

For the first and third terms on the right hand side, we can use a well known trick of De Giorgi \cite{dg}.  For any $j \leq (1+\beta)q$ we can apply H\"{o}lder's inequality followed by Chebyshev's inequality to obtain
\begin{align*}
\iint_k u_k^j \indic{B^k} &= \iint_k (\phi_{k-1} u_k)^j \indic{B^k \cap \{u_{k-1} > 2^{-(k+1)}\}}
\\ &\leq \paren{\iint_k (\phi_{k-1} u_k)^{(1+\beta)q}}^{j/[(1+\beta)q]} \abs{\{\phi_{k-1} u_{k-1} > 2^{-(k+1)}\}}^{1-j/[(1+\beta)q]}
\\ &\leq \paren{\iint_{k-1} (\phi_{k-1} u_{k-1})^{(1+\beta)q}}^{j/[(1+\beta)q]}\abs{\{(\phi_{k-1} u_{k-1})^{(1+\beta)q} > 2^{-(k+1)(1+\beta)q}\}}^{1-j/[(1+\beta)q]}
\\ &\leq \paren{\iint_{k-1} (\phi_{k-1} u_{k-1})^{(1+\beta)q}}^{j/[(1+\beta)q]}\paren{2^{(k+1)(1+\beta)q}\iint_{k-1} (\phi_{k-1}u_{k-1})^{(1+\beta)q}}^{1-j/[(1+\beta)q]}
\\ &\leq 2^{(k+1)((1+\beta)q-j)} \iint_{k-1} (\phi_{k-1}u_{k-1})^{(1+\beta)q}
\\ &\leq C^k \E_{k-2}^{1+\frac{(1+\beta)q-p}{2}}.
\end{align*}
We know from \eqref{useful_q_facts} that $2 < (1+\beta)q$ and $m\ast \leq 1 + \frac{p}{n} \leq (1+\beta)q$, so setting $j=2$ and $j = m^\ast$ gives us bounds on the first and third terms of \eqref{energy_for_dg1}, respectively.  

For the second term of \eqref{energy_for_dg1}, calculate
\begin{align*}
\iint_k |\grad u_k|^2 \indic{B^k} &\leq \iint_k \phi_{k-1}^{4/p} \indic{u_k > 0}|\grad u_{k-1}|^2 \indic{\phi_k u_k > 0}
\\ &\leq \paren{ \iint_k \phi_{k-1}^2 \indic{u_{k-1} > 2^{-(k+1)}} |\grad u_{k-1}|^p }^{2/p} \abs{\{\phi_{k-1} u_{k-1} > 2^{-(k+1)}\}}^{1-2/p}
\\ &\leq \paren{ 2^{k+1} \iint_k \phi_{k-1}^2 u_{k-1} |\grad u_{k-1}|^p }^{2/p} \abs{\{(\phi_{k-1} u_{k-1})^{(1+\beta)q} > 2^{-(k+1)(1+\beta)q}\}}^{1-2/p}
\\ &\leq \paren{ 2^{k+1} \E_{k-1} }^{2/p} \paren{ 2^{(k+1)(1+\beta)q} \iint_{k-1} (\phi_{k-1} u_{k-1})^{(1+\beta)q} }^{1-2/p}
\\ &\leq \paren{ 2^{k+1} \E_{k-2} }^{2/p} \paren{ 2^{(k+1)(1+\beta)q} C^{k-2} \E_{k-2}^{1+\frac{(1+\beta)q-p}{2}} }^{1-2/p}
\\ &\leq C^k \E_{k-2}^{1 + \paren{1-\frac{2}{p}}\frac{(1+\beta)q-p}{2}}.
\end{align*}
The second-to-last inequality used \eqref{q_leq_E}, and the fact that $\E_{k-1} \leq \E_{k-2}$.  

Finally we have the recursive relation 
\begin{equation}
\E_k \leq C^k \paren{\E_{k-2}^{1 + \frac{(1+\beta)q-p}{2}} + \E_{k-2}^{1 + \paren{1-\frac{2}{p}}\frac{(1+\beta)q-p}{2} } + \E_{k-2}^{\paren{1 + \frac{(1+\beta)q-p}{2}} \paren{\frac{1}{m^\ast}}} }. 
\end{equation}
From \eqref{useful_q_facts} and $p > 2$, one sees that the first two of these exponents are strictly greater than 1.  From \eqref{useful_q_facts} and $m^\ast < 1 + \frac{p}{n}$, one sees that the third exponent is strictly greater than 1.  


Because we can assume wlog that all $\E_k$ are small, this simplifies for our purposes to 
\[ \E_k \leq C^k  \E_{k-2}^{1 + \varepsilon }. \]

Therefore the sequence $\E_{2n+1}$ is bounded by a sequence $a_{n+1} = c^n a_n^{1+\varepsilon}$, $a_0 = \E_1$.  Because the exponent is greater than one, the bounding sequence will tend to zero as long as $a_0$ is sufficiently small.  

But since $u \leq 1$ by assumption, we can calculate, for any $b > \sigma$,
\begin{align*} 
\E_1 &= \sup_{[T_1,0]} \int \phi_1^2 u_1^2 + \iint_1 \phi_1^2 u_1 \abs{\grad u_1}^p
\\ &= 2^{2 (b-1)} \paren{ \sup_{[T_1,0]} \int \phi_1^2 u_1^2 \paren{ 2^{-2} \indic{u_0 > 2^{-2}} }^{b-1} + \iint_1 \phi_1^2 u_1 \paren{ 2^{-2} \indic{u_0 > 2^{-2}} }^{b-1} \abs{\grad u_1}^p }
\\ &\leq 2^{2 (b-1)} \paren{ \sup_{[T_1,0]} \int \phi_1^2 u_1^2 u_0^{b-1} + \iint_1 \phi_1^2 u_1 u_0^{b-1} \abs{\grad u_1}^p }
\\ &\leq 2^{2 (b-1)} \paren{ \sup_{[T_1,0]} \int \phi_0^2 u_0^{b+1} + \iint_1 \phi_0^2 u_0^b \abs{\grad u_1}^p }
\\ &\leq C \paren{ \iint_0 (u_0^{b+1} + u_0^{b-\sigma}) \indic{B^0} + \paren{\iint_0 u_0^{b m^\ast} \indic{B^0}}^\frac{1}{m^\ast} }
\\ &\leq C \paren{|\{u>0\}\cap \overline{Q}_2| + |\{u>0\}\cap \overline{Q}_2| + |\{u>0\}\cap \overline{Q}_2|^{1/m^\ast}}.
\end{align*}
Therefore there exists a $\delta_0>0$ sufficiently small that, if $\big|\{u > 0\} \cap \overline{Q}_2 \big| \leq \delta_0$, then $\E_1$ will be small enough that $\E_k \to 0$ as $k \to \infty$.  


If $\E_k \to 0$, then
\[ \norm{u_k}_{L^q([-1,0]\times B_1)} \leq \norm{\phi_k u_k}_{L^q([T_k,0]\times B^k)} \leq C^k \E_k^{\frac{1}{q} + \frac{q-p}{2q}} \to 0.\]  
By the monotone convergence theorem, we conclude that $\norm{(u-1/2)_+}_{L^q([-1,0]\times B_1)} = 0$ and so
\[ |\{ u > \frac{1}{2} \} \cap [-1,0]\times B_1| = 0.\]  

\end{proof}

\section{De Giorgi's second lemma} \label{sec:DG2}

The second De Giorgi lemma is a quantitative version of the statement ``solutions to our PDE cannot have jump discontinuities.''  

Define the sets
\[ Q_3 = [-4,0]\times B_3, \qquad Q_2 = [-4,0]\times B_2, \]
and remember that 
\[ \qquad \overline{Q}_2 = [-2,0]\times B_2.\]  
According to the next theorem, if a solution to \eqref{eq:mainPDEdist} is negative in $Q_2$ on a set of large measure, and $\geq 1$ in $\overline{Q}_2$ on a set of large measure, and it is bounded on all of $Q_3$, then that solution must be strictly between 0 and 1 on a set of large measure in $\overline{Q}_2$.  

The proof is by compactness.  Because the solution is bounded on $Q_3$, we can use the Energy nequality to bound its derivatives on $Q_2$.  By a theorem of Aubin and Lions, which is an instance of the general principle ``bounded derivatives imply compactness,'' we can conclude that the family of bounded solutions is precompact.  Therefore, if the interstitial measure is not bounded below, there must be a limit function which would have both bounded derivatives and a jump discontinuity, a contradiction.  

Because of the coefficient on $|\grad u|$ in the Energy Inequality, the derivatives are not well controlled when $u$ is near zero.  This is solved by considering instead $u$ raised to some power, whose derivatives are trivially controlled when $u$ is near zero, and whose convergence implies the convergence of $u$.  

\begin{proposition}[De Giorgi's Second Lemma]\label{th:DG2}
There exists a positive constant $\mu_0$ depending on $\Lambda$, $p$, $m$, $\delta_0$, and the dimension, such that for any $u$
satisfying Inequality~\eqref{eq:mainPDEdist} in the sense of distributions, with 
\[ u(t,x) \leq 2 \qquad \forall\,(t,x) \in Q_3\]
and
\[\abs{\{u \leq 0\} \cap Q_2} \geq \frac{\abs{Q_2}}{2},\] 
and, for $\delta_0$ the quantity divined in Proposition~\ref{th:DG1},
\[ \abs{\{u \geq 1\} \cap \overline{Q}_2} \geq \delta_0,\]  
it must be the case that
\[ \abs{\{0 < u < 1\} \cap Q_2} \geq \mu_0.\]  
\end{proposition}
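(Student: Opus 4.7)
The plan is to argue by contradiction and compactness. Suppose no such $\mu_0$ exists, so there is a sequence $\{u_n\}$ satisfying Inequality~\eqref{eq:mainPDEdist} on $Q_3$ with $u_n \leq 2$ on $Q_3$, $|\{u_n \leq 0\} \cap Q_2| \geq |Q_2|/2$, $|\{u_n \geq 1\} \cap \overline{Q}_2| \geq \delta_0$, and $|\{0 < u_n < 1\} \cap Q_2| \to 0$. The aim is to extract a subsequential limit $u_*$ and derive a contradiction from the combination of its spatial and temporal regularity with the forced two-valued structure at the limit.

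First I would apply Energy Inequality~\eqref{energy_3} with some fixed $b > \sigma$ and a smooth nonnegative cutoff $\phi$ supported in $Q_3$ and equal to one on $Q_2$. Since $u_n$ is uniformly bounded by $2$ on $Q_3$, the right-hand side is uniformly bounded, giving uniform control of $\iint_{Q_2} u_{n,+}^b |\grad u_{n,+}|^p$ and of the distribution $\del_t (u_{n,+}^{b+1})$ paired against spatial cutoffs. Setting $v_n := u_{n,+}^{1 + b/p}$, the chain rule converts the gradient bound into a uniform $L^p(Q_2)$ bound on $\grad v_n$, so that $v_n$ lies in a bounded set of $L^p(-4,0;W^{1,p}(B_2)) \cap L^\infty(Q_2)$. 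Pairing Inequality~\eqref{eq:mainPDEdist} against test functions of the form $\psi\, u_{n,+}^b$ and absorbing mixed terms via Young's inequality (exactly as in the proof of~\eqref{energy_3}) yields a uniform bound on $\del_t (u_{n,+}^{b+1})$ in an appropriate negative Sobolev space on $Q_2$.

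By the Aubin--Lions compactness lemma I would then extract a subsequence with $v_n \to v_*$ strongly in $L^p(Q_2)$ and, passing to a further subsequence, almost everywhere, so that $u_n \to u_*$ a.e.\ on $Q_2$ with $v_* = u_{*,+}^{1+b/p}$. Passing the measure hypotheses to the limit (the first by Fatou applied to $\chi_{\{u_n > 0\}}$, the second by the reverse-Fatou applied to $\chi_{\{u_n \geq 1\}}$, the third directly from a.e.\ convergence and the vanishing measure assumption) gives $|\{u_* \leq 0\} \cap Q_2| \geq |Q_2|/2$, $|\{u_* \geq 1\} \cap \overline{Q}_2| \geq \delta_0$, and $|\{0 < u_* < 1\} \cap Q_2| = 0$. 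Consequently $v_*$ takes only values in $\{0\} \cup [1, \infty)$ a.e.\ on $Q_2$, and for a.e.\ $t \in [-4, 0]$, $v_*(t, \cdot) \in W^{1,p}(B_2)$ with image in this two-piece set. Post-composing with the Lipschitz truncation $s \mapsto \min(s, 1)$ produces a $\{0,1\}$-valued function in $W^{1,p}(B_2)$; since $B_2$ is connected and $p>1$, this function must be a.e.\ constant, so for a.e.\ $t$ either $v_*(t, \cdot) \equiv 0$ or $v_*(t, \cdot) \geq 1$ a.e.\ on $B_2$.

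To close the argument I would upgrade the strong convergence to temporal continuity: the distributional time-derivative bound combined with the spatial Sobolev bound yields, by Lions--Magenes-type interpolation, $v_* \in C([-4, 0]; L^2_{\mathrm{weak}}(B_2))$. The sets $T_0 := \{t : v_*(t, \cdot) \equiv 0\}$ and $T_1 := \{t : v_*(t, \cdot) \geq 1 \text{ a.e.}\}$ are then preimages of weakly closed subsets of $L^2(B_2)$ under a weakly continuous map, hence relatively closed in $[-4, 0]$; they are disjoint and their union is all of $[-4, 0]$ (up to the null set where $v_*$ fails the rigidity, but a closed dense set equals the whole). From the measure inequalities, $|T_0| \geq 2$ and $|T_1 \cap [-2, 0]| \geq \delta_0 / |B_2|$, so both are nonempty. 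Connectedness of $[-4, 0]$ is then contradicted. The main obstacle is setting up the compactness cleanly: the natural spatial estimate from the energy inequality controls the Sobolev norm of $v_n = u_{n,+}^{1+b/p}$, while the time-derivative estimate sits on $u_{n,+}^{b+1}$, and some interpolation between these two related transforms of $u_{n,+}$ is required both to invoke Aubin--Lions and to obtain the weak continuity in time of the limit.
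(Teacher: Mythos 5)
Your overall skeleton (contradiction, energy inequalities on a power of $u_+$, Aubin--Lions, spatial rigidity forcing $v_*(t,\cdot)$ to be identically $0$ or $\geq 1$ for a.e.\ $t$) matches the paper, but your closing step contains a genuine gap: the claim that $v_* \in C([-4,0];L^2_{\mathrm{weak}}(B_2))$, so that both $T_0$ and $T_1$ are closed and connectedness of $[-4,0]$ gives a contradiction. This continuity cannot follow from the available estimates. Inequality~\eqref{eq:mainPDEdist} is one-sided, so only the \emph{increase} of quantities like $\int \phi^2 u_+^{b+1}$ is controlled; the two-sided bound on $\del_t(u_{n,+}^{b+1})$ obtainable by duality (and this already requires the decomposition $\psi=\varphi^2-K\phi$, since the distributional inequality can only be tested against nonnegative functions --- pairing against a sign-changing $\psi\,u_{n,+}^b$ as you propose is not licit) lives only in $\M([-4,0];W^{-1,\infty})$, i.e.\ it is a BV-in-time bound, which permits jump discontinuities in time. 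And such jumps genuinely occur: with $A=0$, $f=0$, the function equal to $1$ for $t<t_0$ and to $-1$ for $t\geq t_0$ satisfies \eqref{eq:mainPDEdist} in the sense of distributions (its time derivative is a negative multiple of a Dirac mass), since \eqref{eq:basePDEvisc} is \emph{not} assumed in this proposition. So downward jumps in time are admissible, $T_0$ need not be closed from the left, and the symmetric connectedness argument collapses. A telltale sign is that your argument never uses the asymmetry of the hypotheses --- that $\abs{\{u\geq 1\}}\geq\delta_0$ is required in the \emph{later} cylinder $\overline{Q}_2=[-2,0]\times B_2$ while $\abs{\{u\leq 0\}}\geq\abs{Q_2}/2$ is over all of $Q_2=[-4,0]\times B_2$ --- and without that asymmetry the statement would be false for the step-down example just described (suitably placed in time).

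The paper closes the argument differently: setting $H(t)=\norm{\phi^2 v(t,\cdot)}_{L^1}$, the energy inequality \eqref{energy_3} gives only the one-sided bound $H(t)-H(s)\leq C(t-s)$ for $t\geq s$, i.e.\ $H$ can drop instantly but can climb only at a bounded rate. Combined with the dichotomy ($H(t)=0$ or $H(t)\geq\norm{\phi^2}_1$ for a.e.\ $t$), this yields ``once $H$ vanishes it vanishes for all later times''; since $\abs{\{v\leq 0\}\cap Q_2}\geq \abs{Q_2}/2$ forces vanishing times arbitrarily close to (or beyond) $t=-2$, one gets $H\equiv 0$ on $(-2,0]$, contradicting $\abs{\{v\geq 1\}\cap\overline{Q}_2}\geq\delta_0$. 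You would need to replace your continuity-plus-connectedness step by this forward-in-time monotonicity argument. A secondary point: the mismatch you flag between the spatial bound on $u_{n,+}^{1+b/p}$ and the time bound on $u_{n,+}^{b+1}$ is resolved in the paper not by interpolation but by using two different exponents in the energy inequalities ($b=(\sigma+1)p$ for the gradient bound, $b=\sigma+1$ for the time bound), so that both control the \emph{same} function $u_{n,+}^{\sigma+2}$; alternatively, since $u_{n,+}\leq 2$, the chain rule transfers the $W^{1,p}$ bound to the higher power directly, so this part of your plan is repairable.
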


\begin{proof}
Suppose the proposition is false.  Then we can consider a sequence $u_i$ of functions which satisfy all the hypotheses of this proposition but for which 
\[\abs{\{0 < u_i < 1\} \cap Q_2} \leq \frac{1}{i}.\]  

Rather than seek a limit of the sequence $u_i$, we will actually seek a limit of $(u_i)_+^{\sigma+2}$, where $\frac{1}{\sigma} + \frac{2}{p} = 1$ consistent with the notation in Lemma~\ref{th:Energy}.  First we need to bound the space and time derivatives of $(u_i)_+^{\sigma+2}$ uniformly in $i$.  

\step{Bounding the derivatives}

To bound the spatial derivatives, we use the Energy Inequality~\eqref{energy_2} with $b = (\sigma+1)p$, and choose a smooth cutoff function $\phi$ satisfying
\[ \phi:B_3 \to [0,1], \qquad \phi \geq 0, \qquad \supp(\phi) \textrm{ compact}, \qquad \psi(x)=1 \:\; \forall \, x \in B_2. \]

By the Energy Inequality, we have
\begin{align*} 
\iint_{B_2\times [-4,0]} |\grad (u_i)_+^{\sigma +2}|^p &\leq (\sigma +2)^p \iint_{-4}^0 \psi (u_i)_+^{p(\sigma +1)} |\grad (u_i)_+|^p 
\\ &\leq C \iint_{-4}^0 \paren{ (u_i)_+^{p(\sigma+1)-\varq} + (u_i)_+^{p(\sigma+1)+1} } \indic{B_3} + C \paren{\iint_{-4}^0 (u_i)_+^{m^\ast p(\sigma +1)} \indic{B_3} }^{1/m^\ast}
\\ &\leq C(\Lambda,p,n,m).
\end{align*}
Therefore the sequence $\grad (u_i)_+^{\sigma+2}$ is bounded in $L^p([-4,0];L^p(B_2))$ uniformly in $i$.

Bounding the time derivative is much more involved.  We will show that $\del_t (u_i)_+^{\sigma+2}$ are uniformly bounded in $\M([-4,0];W^{-1,\infty})$, where $\M$ means the dual space to $L^\infty$ and $W^{-1,\infty}$ is the dual of $\overline{C^\infty_0(B_2) \cap W^{1,\infty}(B_2)}$.  

Using the Energy Inequality~\eqref{energy_3} with $b = \sigma+1$ and any test function $\varphi: Q_3 \to \R$ which is smooth and compactly supported in space (but not necessarily compactly supported in time), together with the fact that $\norm{f}_1 \leq \norm{f}_m \leq \Lambda$ and $u_i \leq 2$, gives us the bound
\begin{align*}
\chevron{\del_t (u_i)_+^{\sigma+2}, \varphi^2}_{[-4,0]\times B_3} &\leq C(p,\Lambda) \paren{ \iint \varphi^2 (u_i)_+^{\sigma+1}f + \iint \varphi^2 (u_i)_+ + \iint (u_i)_+^{\sigma+2} |\grad \varphi|^2 }
\\ &\leq C(p,\Lambda) \paren{ \norm{\varphi}_{L^\infty(Q_3)}^2 + \norm{\grad \varphi}_{L^\infty(Q_3)}^2 }.
\end{align*}


We must find a similar bound on $\chevron{\del_t (u_i)_+^{\sigma+1}, \psi}$ when $\psi$ is not necessarily the square of a smooth function.  Our strategy is to decompose $\psi$ as a sum of a perfect square and a function independent of time.  To this end, define $\sqrt{\phi}$ a specific smooth function (of space only) supported in $B_3$ and identically 1 on $B_2$.  Then $\phi := \sqrt{\phi}^2$ will also be smooth, supported on $B_3$, and identically 1 on $B_2$.  

Consider any $\psi \in C_0^\infty(Q_3)$, and set $K = \norm{\psi}_\infty + \norm{\grad \psi}_\infty$.  Here and in the sequel, $\norm{\boldsymbol{\cdot}}_\infty$ means $\norm{\boldsymbol{\cdot}}_{L^\infty(Q_3)}$.  Note that $\psi + K \phi$ is non-negative, so we can define $\varphi$ by the relation
\[ \psi = \varphi^2 - K \phi.\]  
Estimate
\begin{align*}
\iint_{Q_2} \psi \del_t (u_i)_+^{\sigma+2} &= -K\iint_{Q_3}\phi \del_t (u_i)_+^{\sigma+2} + \iint_{Q_3} \varphi^2 \del_t (u_i)_+^{\sigma+2}
\\ &\leq K \abs{\int_{-4}^0 \ddt \int \phi (u_i)_+^{\sigma+2} } + C \paren{\norm{\varphi}_\infty^2 + \norm{\grad\varphi}_\infty^2 }
\\ &\leq K \left[ \int \phi (u_i)_+^{\sigma+2}(0,\cdot) + \int \phi (u_i)_+^{\sigma+2}(-4,\cdot) \right] + C \paren{ \norm{\psi + K \phi}_\infty + \norm{\paren{\grad \sqrt{\psi + K \phi} }^2}_\infty }.
\end{align*}
By the chain rule, this last term becomes
\begin{align*}
2 \norm{(\grad \sqrt{\psi + K \phi} )^2}_\infty &= \norm{\frac{|\grad \psi + K \grad \phi|^2}{\psi + K \phi}}_\infty 
\\ &= \sup\paren{ \norm{\frac{|\grad \psi + K \grad \phi|^2}{\psi + K \phi}}_{L^\infty(Q_2)} , \norm{\frac{|\grad \psi + K \grad \phi|^2}{\psi + K \phi}}_{L^\infty(Q_3 \setminus Q_2)} }
\\ &= \sup\paren{ \norm{\frac{|\grad \psi|^2}{\psi + K}}_{L^\infty(Q_2)} , \norm{\frac{|K \grad \phi|^2}{K \phi}}_{L^\infty(Q_3 \setminus Q_2)} }
\\ &\leq \sup\paren{ \frac{1}{\norm{\grad \psi}_\infty}\norm{|\grad \psi|^2}_{\infty}, \frac{K^2}{K} \norm{ \grad \sqrt{\phi} }_\infty^2 }
\\ &\leq C_\phi K.
\end{align*}
In the above calculation, remember that $\phi$ is constant on $Q_2$ and $\psi = 0$ outside $Q_2$, that $\psi + K \geq \norm{\grad \psi}_\infty$ by the definition of $K$, and that $\sqrt{\phi}$ is smooth by assumption.  

We see now that
\[ \chevron{\psi,\del_t (u_i)_+^{\sigma+2}} \leq C(\Lambda,p,n,\phi) \paren{\norm{\psi}_\infty + \norm{\grad \psi}_\infty}\]
and, by duality, $\del_t (u_i)_+^{\sigma+2}$ is bounded in $\M([-4,0];W^{-1,\infty}(B_2))$.  

In order to apply our compactness lemma, we need $(u_i)_+^{\sigma+2}$ to be absolutely continuous in time (i.e. we want $L^1$, not $\M$).  Therefore consider a family of mollifiers $\eta_\delta$ tending to a dirac measure as $\delta \to 0$.  Convolving with respect to time, we obtain smooth-in-time functions.  
\[ \eta_\delta \ast (u_i)_+^{\sigma+2} \in L^p([-4,0]; W^{1,p}(B_2)), \qquad \del_t \left[\eta_\delta \ast (u_i)_+^{\sigma+2} \right] \in L^1([-4,0];W^{-1,\infty}(B_2)) \]
are uniformly bounded independent of $\delta < 1$.  

The Aubin-Lions Lemma indicates that the family $\eta_\delta \ast (u_i)_+^{\sigma+2}$ is compact in $L^1([-4,0]\times B_2)$.  Choose a sequence $\delta_i \to 0$ such that
\[ \norm{(u_i)_+^{\sigma+2} - \eta_{\delta_i} \ast (u_i)_+^{\sigma+2}}_{L^1} \leq \frac{1}{i}.\]
By compactness, the sequence $\eta_{\delta_i} \ast (u_i)_+^{\sigma+2}$ has a subsequential limit $v$, and
\[ \norm{(u_i)_+^{\sigma+2} - v}_1 \leq \norm{(u_i)_+^{\sigma+2} - \eta_{\delta_i} \ast (u_i)_+^{\sigma+2}}_1 + \norm{\eta_{\delta_i} \ast (u_i)_+^{\sigma+2} - v}_1 \to 0.\]  

That is to say, $(u_i)_+^{\sigma+2} \to v$ in $L^1(Q_2)$.  

\step{Showing that the limit engenders a contradiction}

By a measure-theoretic argument, 
\begin{align*}
\tag{*} |\{v \leq 0\} \cap Q_2| &\geq \frac{|Q_2|}{2}, \label{measure_of_v_half} \\
\tag{**} |\{v \geq 1\} \cap \overline{Q}_2| &\geq \delta_0, \textrm{ and} \label{measure_of_v_delta} \\
|\{ 0 < v < 1 \} \cap Q_2| &= 0. 
\end{align*}

The map $f \mapsto \norm{\grad f}_{L^p(Q_2)}$ is lower-semi-continuous on $L^1(Q_2)$, and hence 
\[ \int_{-4}^0 \norm{\grad v}_{L^p(B_2)}^p \,dt < \infty.\]
This implies that for almost every $t \in [-4,0]$, $\norm{\grad v}_p$ is finite; and for such $t$, $v$ must have no spatial jump discontinuities.  In other words, there are three kinds of $t \in [-4,0]$: those at which $v$ is identically 0, those at which $v(t,x) \geq 1$ $\forall x \in B_2$, and the exceptions which have measure zero in $[-4,0]$.  

If we define a new smooth cutoff $\phi$ on $B_2$, and set
\[ H(t) = \norm{\phi^2(\cdot)v(t,\cdot)}_{L^1(B_2)},\]
then for a.e. $t$, either $H(t) = 0$ or $H(t) \geq \norm{\phi^2}_1$.  

On the other hand, we know that $H$ cannot have (certain kinds of) jump discontinuities.  Because $(u_i)_+^{\sigma+2} \to v$ in $L^1(Q_2)$, we know that 
\[ H_i \equiv \norm{\phi^2 (u_i)_+^{\sigma+2}}_1 \longrightarrow H \qquad \textrm{in } L^1([-4,0]).\]  
And by the Energy Inequality~\eqref{energy_3}, with cutoff $\phi$ and $b = \sigma+1$, the derivative of each $H_i$ is bounded uniformly in $i$: notice that $\del_t \phi = 0$ and so for any time interval $[s,t]$ we have
\begin{align*}
H_i(t) - H_i(s) &= \int_s^t \ddt \int \phi^2 (u_i)_+^{\sigma+2}
\\ &\leq C(p,\Lambda,\phi) \iint_s^t \paren{(u_i)_+^{\sigma+1} + (u_i)_+^{\sigma+2} + (u_i)_+^1} \indic{\supp(\phi)}
\\ &\leq [s-t] C(p,\Lambda,\phi).
\end{align*}
Therefore (again by lower-semi-continuity), $\ddt H$ is bounded above.  

This means in particular that if $H(s) = 0$, then $H(t) = 0$ $\forall t \geq s$.  And we know by \eqref{measure_of_v_half} that $v = 0$ on a set of large measure.  In fact, necessarily $H(t) = 0$ $\forall t \in (-2,0]$.  This contradicts \eqref{measure_of_v_delta}, and so the proposition is proven.  

\end{proof}

\section{Transporting improvement forwards in time}\label{sec:flowforward}

Using the propositions proven thus far, one can show, under the appropriate hypotheses, that if a solution to Inequality~\eqref{eq:mainPDEdist} is $\geq -2$ in $Q_3$, then it is in fact $\geq -2 + \varepsilon$ in $[-4,-3]\times B_\varepsilon$.  This is not quite what we set out to prove; we want solutions to become regular after some time elapses, and hence the lower bound must be somewhere in the region $[-1,0]\times B_1$.  

To bridge the gap, we use a barrier function to "flow" the improvement forward in time.  Our solution will still be $\geq -2 + \varepsilon'$ on a ball of radius $\varepsilon'$ at the end of the time interval, and though $\varepsilon'$ becomes smaller as time elapses, it never vanishes entirely.  

This is the first time we use \eqref{eq:mainPDEvisc}.  This inequality is true only in a viscosity sense, so instead of energy methods, we must construct a barrier function which constitutes a subsolution to 
\[ \del_t u + \Lambda |\grad u|^p - \Lambda_0 m^-(D^2 u) = 0. \]

\begin{proposition}\label{th:flowforward}
There exists a constant $0 < K_0 < 1$ depending only on $p$, $\Lambda$, and $n$ such that the following holds:
Let $0 < \lambda \leq K_0$ be a constant and $u$ a viscosity supersolution to Inequality~\eqref{eq:mainPDEvisc} on the interior of $[0,T]\times B_2$ with $T < 4$ and $\Lambda_0 \leq \lambda^2 K_0$.  Suppose that
\[ u \geq -2 \quad \textrm{on} \quad [0,T] \times B_2,\]
\[ u \geq -2 + \lambda^2 \quad \textrm{on} \quad 0 \times B_\lambda.\]
Then
\[ u \geq -2+\frac{\lambda^2}{2} \quad \textrm{on} \quad [0,T] \times B_{\lambda/2}.\]  
\end{proposition}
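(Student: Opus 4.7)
The plan is to build an explicit classical subsolution $\tilde\Phi$ of $\del_t v + \Lambda|\grad v|^p - \Lambda_0 m^-(D^2 v) = 0$ lying beneath $u$ on the parabolic boundary of $[0,T]\times B_2$, then to invoke the comparison principle for this degenerate parabolic equation. Since the hypotheses only guarantee $u \geq -2$ off $B_\lambda$, the right candidate is a truncated paraboloid,
\[ \tilde\Phi(t,x) := -2 + \lambda^2\psi(t) - |x|^2, \qquad \Phi := \max(-2, \tilde\Phi), \]
with $\psi(t) := 1 - C_0 t$ and $C_0 > 0$ to be chosen. Because $\psi \leq 1$, the set $\{\tilde\Phi > -2\}$ sits inside $B_\lambda$, and there $|\grad\tilde\Phi| = 2|x| \leq 2\lambda$ and $D^2\tilde\Phi = -2I$, so $m^-(D^2\tilde\Phi) = -2$. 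A direct computation yields
\[ \del_t\tilde\Phi + \Lambda|\grad\tilde\Phi|^p - \Lambda_0 m^-(D^2\tilde\Phi) \leq -\lambda^2 C_0 + 2^p\Lambda\lambda^p + 2\Lambda_0, \]
which is non-positive once we set $C_0 := 2^p\Lambda\lambda^{p-2} + 2\Lambda_0/\lambda^2$.

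Next I promote $\Phi$ to a viscosity subsolution on all of $[0,T]\times B_2$. Suppose $\varphi\in C^2$ touches $\Phi$ from above at an interior $(t_0,x_0)$. If $\Phi(t_0,x_0) > -2$, then locally $\Phi = \tilde\Phi$ and $\varphi - \tilde\Phi$ has an interior minimum at $(t_0,x_0)$; thus $\del_t\varphi = \del_t\tilde\Phi$, $\grad\varphi = \grad\tilde\Phi$, and $D^2\varphi \geq D^2\tilde\Phi$, and monotonicity of $m^-$ in the matrix sense transfers the classical subsolution inequality for $\tilde\Phi$ to $\varphi$. If instead $\Phi(t_0,x_0) = -2$, then $\varphi \geq -2$ locally with equality at the contact point, forcing $\del_t\varphi = 0$, $\grad\varphi = 0$, and $D^2\varphi \geq 0$, so $m^-(D^2\varphi) = 0$ and the equation holds trivially. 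For the parabolic boundary: $\tilde\Phi(0,x) \leq -2+\lambda^2 \leq u(0,x)$ on $B_\lambda$, $\tilde\Phi(0,x) \leq -2 \leq u(0,x)$ on $B_2\setminus B_\lambda$, and $\tilde\Phi \leq -2 \leq u$ on $\partial B_2$. The standard comparison principle for this equation (cf.\ \cite{b13}) now gives $u \geq \Phi$ throughout $[0,T]\times B_2$.

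Finally, on $[0,T]\times B_{\lambda/2}$ we have $\Phi \geq \tilde\Phi \geq -2 + \lambda^2\psi(T) - \lambda^2/4$, which exceeds $-2 + \lambda^2/2$ exactly when $\psi(T) \geq 3/4$, i.e.\ when $C_0 T \leq 1/4$. Using $T < 4$, this reduces to $C_0 \leq 1/16$, which splits into the two smallness conditions $2^p\Lambda\lambda^{p-2} \leq 1/32$ and $2\Lambda_0/\lambda^2 \leq 1/32$. The first is achievable by choosing $K_0 = K_0(p,\Lambda,n)$ small enough, precisely because $p > 2$ makes $\lambda^{p-2}$ tiny under $\lambda \leq K_0$; the second is automatic from the hypothesis $\Lambda_0 \leq \lambda^2 K_0$ as soon as $K_0 \leq 1/64$.

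The main obstacle is locating a barrier whose second-order term does not overwhelm its time derivative up to the edge of its support. A first instinct --- a smooth bump like $(1-|x|^2/\lambda^2)^k_+$ --- collapses no matter the choice of $k$: its tangential Hessian is of order $(1-\rho^2)^{k-1}$ while the profile itself is $(1-\rho^2)^k$, so $\Lambda_0|m^-|$ swamps $|\del_t\tilde\Phi|$ near $\rho = 1$. The truncated paraboloid sidesteps this because the sharp edge of its support is absorbed by the viscosity test at $\Phi = -2$, rather than being smoothed over.
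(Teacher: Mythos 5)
Your proof is correct, and it follows the same overall route as the paper: construct an explicit subsolution of $\del_t v + \Lambda\abs{\grad v}^p - \Lambda_0 m^-(D^2 v) = 0$ lying below $u$ on the parabolic boundary of $[0,T]\times B_2$, then conclude by the standard comparison principle. The only real difference is the shape of the barrier: the paper uses the smooth profile $-2+\lambda^2\beta(|x|/\lambda)-\frac{\lambda^2}{8}t$, with $\beta$ a fixed bump function, which is a classical subsolution everywhere, whereas you use the truncated paraboloid $\max(-2,\,-2+\lambda^2\psi(t)-|x|^2)$ and check the viscosity subsolution property by hand on the truncation set (your two-case test-function argument, the monotonicity of $m^-$, the boundary comparison, and the final evaluation on $[0,T]\times B_{\lambda/2}$ are all fine; note also that your $K_0$ needs no dependence on $n$, which is harmless since the statement allows it). Both verifications rest on the same two smallness mechanisms: $\lambda^{p-2}\leq K_0^{p-2}$ controls the gradient term because $p>2$, and $\Lambda_0\leq\lambda^2 K_0$ controls the Hessian term. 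One remark in your closing paragraph is mistaken, though it does not affect the validity of your argument: a smooth bump does \emph{not} collapse --- it is precisely the paper's barrier. The subsolution inequality pits the Hessian contribution against the \emph{constant} time slope $-\lambda^2 C_0$, not against the local height of the profile, and since $\Lambda_0\leq\lambda^2 K_0$ one has $\Lambda_0\abs{m^-(D^2\sigma)}\leq C_\beta\,\lambda^2 K_0$, which is uniformly dominated by $\lambda^2/8$ once $K_0$ is small; the obstruction you describe would arise only if the time drift were taken proportional to the bump profile rather than additive.
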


\begin{proof}
%
We define the barrier function
\[ \sigma(t,x) := -2 + \lambda^2 \beta\paren{\frac{|x|}{\lambda}} - \frac{\lambda^2}{8} t,\] 
where $\beta:\R^+ \to \R$ is a smooth function supported on $[0,1]$ and identically 1 on $[0,1/2]$.  

If we can show that $\sigma$ is a subsolution to \eqref{eq:mainPDEvisc}, and that it is less than $u$ on the parabolic boundary $0 \times B_2 \cup [0,T]\times \del B_2$, then the standard theory of comparison principles tells us that $u \geq \sigma$ on the whole interior of $[0,T]\times B_2$.  See \cite{cran} for the elliptic version of the comparison principle, and \cite{j} for a treatment more specific to the parabolic case.  

In particular, for $(t,x) \in [0,T] \times B_{\lambda/2}$ we have
\[\sigma(t,x) = -2 + \lambda^2 (1-t/8) \geq -2 + \lambda^2 (1-T/8) \geq -2 + \lambda^2/2. \]  
Thus showing $u \geq \sigma$ will prove the proposition.  

\step{Barrier is below $u$ on the boundary}

At $t = 0$, 
\[ \sigma(0,x) \leq -2 + \lambda^2 \leq u \qquad \forall \, x \in B_\lambda, \]
\[ \sigma(0,x) \leq -2  \leq u \qquad \forall \, x \in B_2 \setminus B_\lambda; \]
and on the spatial boundary $|x|=2$, 
\[ \sigma(t,x) = -2 - \frac{\lambda^2}{8} t \leq -2 \leq u \qquad \forall \, t \in [0,T].\]
Thus on the parabolic boundary of $[0,T] \times B_2$, we have $\sigma \leq u$.  

\step{Barrier is a subsolution}

By construction 
\[ \del_t \sigma(t,x) = -\lambda^2/8 \]  
and
\[ |\grad \sigma|(t,x) = \lambda \beta'\paren{\frac{|x|}{\lambda}}. \] 
To compute $D^2 \sigma$, notice that $\sigma$ is radially symmetric in space, and so it suffices to compute the Hessian at the point $x = (|x|,0,\ldots,0)$.  At this point, one can compute directly that
\begin{align*} 
\del_{11} \sigma(t,x) &= \frac{d^2}{dh^2} \bigg\rvert_{h=0}  \lambda^2 \beta\paren{\frac{|x|+h}{\lambda}} 
\\ &= \beta''\paren{\frac{|x|}{\lambda}}
\end{align*}
and for $i \neq 0$
\begin{align*} 
\del_{ii} \sigma(t,x) &= \frac{d^2}{dh^2} \bigg\rvert_{h=0}  \lambda^2 \beta\paren{\frac{\sqrt{|x|^2 + h^2}}{\lambda}} 
\\ &= \frac{\lambda}{|x|} \beta'\paren{\frac{|x|}{\lambda}}.
\end{align*}
For any $i \neq j$, assume without loss of generality that $i \neq 1$.  Then $[\del_i \sigma](x) = 0$ for any $x$ in the hyperplane $x_i = 0$, by radial symmetry.  Therefore $\del_j [\del_i \sigma] = 0$ at $(|x|,0,\ldots,0)$.  

We conclude that the matrix $D^2 \sigma (t,x)$ is a diagonal matrix with eigenvalues 
\[ \frac{\lambda}{|x|} \beta'\paren{\frac{|x|}{\lambda}} \quad \textrm{and} \quad \beta''\paren{\frac{|x|}{\lambda}}, \]
and by symmetry it should have the same eigenvalues at generic $x$.  

Therefore, to see if $\sigma$ is a subsolution, calculate
\begin{align*}
\del_t \sigma + \Lambda |\grad \sigma|^p - \Lambda_0 m^-(D^2 \sigma) &= -\frac{\lambda^2}{8} + \Lambda \lambda^p (\beta')^p - \Lambda_0 \min\left( \beta'', \frac{\lambda}{|x|} \beta', 0 \right)
\\ &\leq \frac{-\lambda^2}{8} + \Lambda \lambda^p \norm{\beta'}_\infty^p + \Lambda_0 \norm{\beta''}_\infty + \Lambda_0 \frac{\lambda}{1/2} \norm{\beta'}_\infty
\\ &\leq \frac{-\lambda^2}{8} + \Lambda \lambda^p \norm{\beta'}_\infty^p + \lambda^2 K_0 \norm{\beta''}_\infty + 2 \lambda^3 K_0 \norm{\beta'}_\infty
\\ &= \lambda^2 \paren{\Lambda \lambda^{p-2} \norm{\beta'}_\infty^p + K_0 \norm{\beta''}_\infty + 2 \lambda K_0 \norm{\beta'}_\infty - \frac{1}{8}}
\\ &\leq \lambda^2 \paren{\Lambda K_0^{p-2} \norm{\beta'}_\infty^p + K_0 \norm{\beta''}_\infty + K_0^2 \frac{n-1}{1/2} \norm{\beta'}_\infty - \frac{1}{8}}.
\end{align*}

This last quantity is negative provided $K_0$ sufficiently small, depending on $\Lambda$, $p$, the dimension, and the specific choice of $\beta$.  

\end{proof}

\section{Proof of the main theorem} \label{sec:main}

Having completed the core of the proof, we now come to the final section.  The pieces are all present, and we need only put them together.  This section contains three lemmas before the proof.  The first two (Lemmas \ref{th:scaling1} and \ref{th:scaling2}) tell us which scalings constitute symmetries of our PDE.  Lemma \ref{th:oscillation}, the Oscillation Lemma, applies Propositions~\ref{th:DG1} and \ref{th:DG2} iteratively in order to control the oscillation of solutions to our PDE.  Finally the proof of the Main Theorem will show how the Oscillation Lemma is equivalent to interior H\"{o}lder continuity.  

The proof of the Oscillation Lemma is slightly non-standard.  The rest is technical, with no new ideas.  

\begin{lemma} \label{th:scaling1}
 If $u$ satisfies the two equations \eqref{eq:mainPDEdist} and \eqref{eq:mainPDEvisc} on a cylinder $[T_0,0]\times\Omega$, and $\alpha,\beta > 0$ are any two real numbers satisfying 
\[ \beta \leq \alpha\n \quad \beta \leq \alpha^{-\frac{p-1}{p-2}}, \quad \beta \leq \alpha^{-\frac{p(m-1)+1}{p(m-1) - n} } , \]
then the modified function 
\[ v(t,x) := \alpha u(\alpha^{p-1}\beta^p t, \beta x)\] 
satisfies the equations
\begin{align*}
\del_t v + \Lambda\n \abs{\grad v}^p - \div(A' \grad v) &\leq f' \\
\del_t v + \Lambda \abs{\grad v}^p - \Lambda_0' m^-(D^2 v)  &\geq 0
\end{align*}
on $\left[ \frac{T_0}{\alpha^{p-1}\beta^p},0\right] \times \frac{1}{\beta} \Omega$, with $\Lambda_0' = \alpha^{p-1}\beta^{p-2} \Lambda_0 \leq \Lambda_0$, $\norm{A'}_\infty \leq \norm{A}_\infty$ and $\norm{f'}_m \leq \norm{f}_m$.
\end{lemma}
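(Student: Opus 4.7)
The plan is a direct chain-rule computation. Writing $(s, y) = (\alpha^{p-1}\beta^p t, \beta x)$, I would first observe that by the chain rule applied to $v(t,x) = \alpha u(s, y)$,
\[ \del_t v = \alpha^p \beta^p (\del_s u)(s,y), \quad \grad v = \alpha\beta (\grad_y u)(s,y), \quad D^2 v = \alpha\beta^2 (D^2_y u)(s,y), \]
so in particular $|\grad v|^p = \alpha^p\beta^p |\grad u|^p$ and $m^-(D^2 v) = \alpha\beta^2 m^-(D^2 u)$ by the positive $1$-homogeneity of $m^-$.

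For the viscosity inequality, I would substitute these identities into $\del_t v + \Lambda|\grad v|^p - \Lambda_0' m^-(D^2 v)$ and factor $\alpha^p\beta^p$ out; with the choice $\Lambda_0' := \alpha^{p-1}\beta^{p-2} \Lambda_0$, this recovers exactly $\alpha^p\beta^p$ times the LHS of \eqref{eq:mainPDEvisc} for $u$, which is $\geq 0$ by hypothesis. The constraint $\Lambda_0' \leq \Lambda_0$ then amounts to $\alpha^{p-1}\beta^{p-2} \leq 1$, i.e., $\beta \leq \alpha^{-(p-1)/(p-2)}$.

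For the distributional inequality, the key step is to set $A'(t,x) := \alpha^{p-1}\beta^{p-2}\, A(s,y)$; a second chain-rule computation then gives $\div_x[A'\grad v](t,x) = \alpha^p\beta^p\, (\div_y[A\grad u])(s,y)$. Multiplying \eqref{eq:mainPDEdist} through by $\alpha^p\beta^p$ and defining $f'(t,x) := \alpha^p \beta^p f(s,y)$ yields exactly the desired distributional inequality for $v$. The bound $\norm{A'}_\infty \leq \norm{A}_\infty$ again reduces to $\alpha^{p-1}\beta^{p-2} \leq 1$, and a change of variables gives $\norm{f'}_m^m = \alpha^{p(m-1)+1}\beta^{p(m-1)-n}\norm{f}_m^m$; since $p(m-1)-n>0$ by the hypothesis $m > 1+n/p$, the bound $\norm{f'}_m \leq \norm{f}_m$ rearranges to $\beta \leq \alpha^{-(p(m-1)+1)/(p(m-1)-n)}$. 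The remaining listed condition $\beta \leq \alpha^{-1}$ serves as a baseline constraint and, when $\alpha \geq 1$, is implied by either of the other two (both exponents there are $\geq 1$).

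There is no serious obstacle in this lemma; the only mildly delicate point is recognizing that $A$ must be rescaled by the scalar factor $\alpha^{p-1}\beta^{p-2}$ (rather than merely composed with the scaled variables), as this is precisely what forces $\div[A'\grad v]$ to carry the factor $\alpha^p\beta^p$ needed to balance the scalings of $\del_t v$ and $|\grad v|^p$.
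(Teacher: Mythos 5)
Your proposal is correct and follows essentially the same route as the paper: the same rescalings $A'=\alpha^{p-1}\beta^{p-2}A(\alpha^{p-1}\beta^p t,\beta x)$ and $f'=\alpha^p\beta^p f(\alpha^{p-1}\beta^p t,\beta x)$, the same chain-rule factorization of $(\alpha\beta)^p$ out of both inequalities, and the same change-of-variables computation of $\norm{f'}_m$ matched against the hypothesis $\beta\leq\alpha^{-\frac{p(m-1)+1}{p(m-1)-n}}$. No gaps.
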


\begin{proof}
One must take
\begin{align*}
f'(t,x) &:= \alpha^p \beta^p f(\alpha^{p-1}\beta^p t, \beta x), \\
A'(t,x) &:= \alpha^{p-1}\beta^{p-2} A(\alpha^{p-1}\beta^p t, \beta x).
\end{align*}

Applying our differential operator to $v$, we obtain 
\begin{align*}
\del_t v + \Lambda\n \abs{\grad v}^p - \div(A' \grad v) &= (\alpha\beta)^p \del_t u + (\alpha\beta)^p \Lambda\n \abs{\grad u}^p - (\alpha \beta)^p  \div(A \grad u)
\\ &= (\alpha\beta)^p \left[ \del_t u + \Lambda\n \abs{\grad u}^p - \div (A \grad u) \right]
\\ &\leq f'
\end{align*}

For the other inequality, similarly, 
\begin{align*}
\del_t v + \Lambda \abs{\grad v}^p - \Lambda_0 m^-(D^2 v) &= (\alpha\beta)^p \del_t u + (\alpha\beta)^p \Lambda \abs{\grad u}^p - \alpha \beta^2 \Lambda_0 m^-(D^2 u)
\\ &= (\alpha\beta)^p \left[ \del_t u + \Lambda \abs{\grad u}^p - \Lambda m^-(D^2 u) \right]
\\ &\geq 0.
\end{align*}


That $\Lambda_0' \leq \Lambda_0$ and $\norm{A'}_\infty \leq \norm{A}_\infty$ follows immediately from our assumptions on $\alpha$, $\beta$.  For $\norm{f'}_m$, we notice that $p - \frac{p+n}{m}$ is necessarily positive, and calculate
\begin{align*}
\norm{\alpha^p \beta^p f(\alpha^{p-1}\beta^p t, \beta x)}_m &= \alpha^p \beta^p (\alpha^{p-1}\beta^p \beta^n )^{-1/m} \norm{f}_m 
\\ &= \alpha^{p - \frac{p-1}{m}} \beta^{p - \frac{p+n}{m}} \norm{f}_m
\\ &\leq \alpha^{p - \frac{p-1}{m}} \paren{\alpha^{-\frac{p(m-1)+1}{p(m-1) - n} }}^{p - \frac{p+n}{m}} \norm{f}_m = \norm{f}_m.
\end{align*}

\end{proof}

\begin{lemma} \label{th:scaling2}
If $u$ satisfies Inequality~\eqref{eq:mainPDEdist} on a cylinder $[T_0,0]\times\Omega$, there exist constants $e_1 \in (2,p)$ and $e_2 < 0$ dependent on $n$, $m$, $p$ such that, for any two real numbers $0 < \beta \leq 1$ and $1\leq \alpha \leq \beta^{e_2}$,  
the modified function 
\[ v(t,x) := \alpha u(\beta^{e_1} t, \beta x)\]
also satisfies Inequality~\eqref{eq:mainPDEdist} on $\left[ T_0,0\right] \times \Omega$ with parameters $\norm{f'}_m \leq \norm{f}_m$, $\norm{A'}_\infty \leq \norm{A}_\infty$ and the same $\Lambda$.  
\end{lemma}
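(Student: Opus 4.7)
The strategy is to construct explicit $A'$ and $f'$ for the scaled function $v(t,x)=\alpha u(\beta^{e_1}t,\beta x)$, substitute into \eqref{eq:mainPDEdist}, and exploit the fact that we have only an inequality (not an equality) to absorb the mismatched gradient term, rather than forcing exact cancellation as in Lemma~\ref{th:scaling1}.

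I take
\[ A'(t,x) := \beta^{e_1-2}A(\beta^{e_1}t,\beta x), \qquad f'(t,x) := \alpha\beta^{e_1}f(\beta^{e_1}t,\beta x). \]
The factor $\beta^{e_1-2}$ on $A$ is chosen so that, by the chain rule, $\div(A'\grad v)=\alpha\beta^{e_1}\,[\div(A\grad u)](\beta^{e_1}t,\beta x)$. Combined with $\del_t v=\alpha\beta^{e_1}\del_t u$ and $\abs{\grad v}^p=\alpha^p\beta^p\abs{\grad u}^p$, the left-hand side of \eqref{eq:mainPDEdist} applied to $v$ equals
\[ \alpha\beta^{e_1}\bracket{\del_t u-\div(A\grad u)}+\Lambda\n\alpha^p\beta^p\abs{\grad u}^p. \]
Substituting the original inequality in the rearranged form $\del_t u-\div(A\grad u)\le f-\Lambda\n\abs{\grad u}^p$, I get the upper bound $\alpha\beta^{e_1}f+\Lambda\n(\alpha^p\beta^p-\alpha\beta^{e_1})\abs{\grad u}^p$. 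The bracketed coefficient is non-positive exactly when $\alpha^{p-1}\beta^{p-e_1}\le 1$, i.e.\ $\alpha\le\beta^{(e_1-p)/(p-1)}$, so in that regime the left-hand side is at most $f'$, and \eqref{eq:mainPDEdist} for $v$ holds distributionally with the same $\Lambda$.

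It remains to check $\norm{A'}_\infty\le\norm{A}_\infty$ and $\norm{f'}_m\le\norm{f}_m$. The former is automatic from $\beta^{e_1-2}\le 1$, which requires $e_1\ge 2$. The latter, after the change of variables $\tau=\beta^{e_1}t$, $y=\beta x$, reduces to $\alpha^m\beta^{e_1(m-1)-n}\le 1$, i.e.\ $\alpha\le\beta^{(n-e_1(m-1))/m}$.

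To close the loop, I select $e_1$ in the interval $(\max(2,\,n/(m-1)),\,p)$---which is nonempty because the hypothesis $m>1+\max(n,2)/p$ forces both $n/(m-1)<p$ and $2<p$---and set
\[ e_2 := \max\left\{\frac{e_1-p}{p-1},\ \frac{n-e_1(m-1)}{m}\right\}. \]
Both fractions are strictly negative with this choice of $e_1$, so $e_2<0$; and $\alpha\le\beta^{e_2}$ (with $\beta\le 1$) then implies both of the scaling constraints on $\alpha$ derived above. The only real subtlety is tracking the three constraints $e_1>2$, $e_1<p$, and $e_1>n/(m-1)$ simultaneously and observing that the hypothesis on $m$ makes them all compatible, producing a nondegenerate admissible range $[1,\beta^{e_2}]$ for the amplitude $\alpha$.
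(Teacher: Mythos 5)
Your proposal is correct and follows essentially the same route as the paper: the same choices of $A'$, $f'$, $e_1\in(\max(2,\tfrac{n}{m-1}),p)$ and $e_2=\max\bigl(\tfrac{e_1-p}{p-1},\tfrac{n-e_1(m-1)}{m}\bigr)$, with the mismatched gradient term handled by the sign of its coefficient under $\alpha^{p-1}\beta^{p-e_1}\le 1$, and the same norm computations for $A'$ and $f'$. The only cosmetic difference is that you absorb the gradient term after rearranging the inequality, whereas the paper bounds its coefficient by $\Lambda\n$ directly; these are the same observation.
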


\begin{proof}
Since $\frac{n}{m-1} < p$ and $p > 2$, we can choose a constant $e_1 \in (\frac{n}{m-1},p)$ such that $e_1 > 2$.  Let
\[ e_2 := \max\paren{-\frac{p-e_1}{p-1}, \frac{n}{m} - e_1\frac{m-1}{m} } \]
so that
\[ \alpha^{p-1} \beta^{p-e_1} = \paren{\alpha \paren{\frac{1}{\beta}}^{-\frac{p-e_1}{p-1}}}^{p-1} \leq \paren{\alpha \paren{\frac{1}{\beta}}^{e_2}}^{p-1} \leq 1\]
 and $\alpha \beta^{e_1\frac{m-1}{m}-\frac{n}{m}} \leq 1$.  

Define 
\begin{align*}
A'(t,x) &:= \beta^{e_1-2} A(\beta^{e_1} t, \beta x), \\
f'(t,x) &:= \alpha \beta^{e_1} f(\beta^{e_1} t, \beta x).
\end{align*}
Applying our differential operator to $v$, we obtain 
\begin{align*}
\del_t v + \Lambda\n \abs{\grad v}^p + \div(A' \grad v) &= \alpha \beta^{e_1} \del_t u + (\alpha\beta)^p \Lambda\n \abs{\grad u}^p + \alpha \beta^{e_1} \div(A \grad u)
\\ &= \alpha\beta^{e_1} \left[ \del_t u + \paren{\alpha^{p-1}\beta^{p-e_1}} \Lambda\n \abs{\grad u}^p + \div(A \grad u) \right]
\\ &\leq \alpha\beta^{e_1} \left[ \del_t u + \Lambda\n \abs{\grad u}^p + \div(A \grad u) \right]
\\ &\leq \alpha\beta^{e_1} f = f'.
\end{align*}

That $\norm{A'}_\infty \leq \norm{A}_\infty$ follows immediately from our assumption that $e_1 > 2$.  It remains to calculate the norm of $f'$:
\begin{align*}
\norm{f'}_m &= \alpha \beta^{e_1} (\beta^{e_1} \beta^n)^{-1/m} \norm{f}_m 
\\ &= \alpha \beta^{e_1(1-\frac{1}{m}) - \frac{n}{m}} \norm{f}_m
\\ &\leq \norm{f}_m.
\end{align*}

A priori, $v$ will satisfy this inequality on $\left[ \frac{T_0}{\beta^{e_1}},0\right] \times \frac{1}{\beta} \Omega$.  Since we assume $\beta \leq 1$, this in particular means it is satisfied on $[T_0,0] \times \Omega$.

\end{proof}

At last we can prove the Oscillation Lemma.  The oscillation of a function is the distance between its supremum and its infimum, and for solutions of \eqref{eq:mainPDEdist} and \eqref{eq:mainPDEvisc}, if the oscillation is finite on a region it will be strictly less on a strictly smaller region.  

\begin{lemma}[Oscillation Lemma] \label{th:oscillation}
There exist constants $\lambda^*>0$, $r^*>0$, $T^* < 0$ depending on $\Lambda$, $p$, $n$, $\mu_0$ (from Proposition~\ref{th:DG2}), $\delta_0$ (from Proposition~\ref{th:DG1}), $K_0$ (from Proposition~\ref{th:flowforward}), and $e_1$, $e_2$ (from Lemma~\ref{th:scaling2}) such that, for any solution $u$ to Inequalities~\eqref{eq:mainPDEdist} and \eqref{eq:mainPDEvisc} on $Q_3$, with $\Lambda_0 < (\lambda^*)^2 K_0$, we have the following implication:
If
\[ |u| \leq 2 \qquad \forall\, (t,x) \in Q_3, \]
then either
\[ \sup_{[T^*,0]\times B_{r^*}(0)} u \leq 2 - \frac{(\lambda^*)^2}{2}\]
or
\[ \inf_{[T^*,0]\times B_{r^*}(0)} u \geq -2 + \frac{(\lambda^*)^2}{2}.\]  

\end{lemma}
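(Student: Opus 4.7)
The plan splits into three pieces: a dichotomy on the sign of $u$ in $Q_2$, an iteration of De Giorgi's two Lemmas (Propositions~\ref{th:DG1} and~\ref{th:DG2}), and a flow-forward (Proposition~\ref{th:flowforward}) to land on the target cylinder in one of the cases. First I split on $|\{u \leq 0\} \cap Q_2|$. In the case this measure is $\geq |Q_2|/2$ I seek an upper-bound improvement $u \leq 2 - (\lambda^*)^2/2$; in the opposite case I pass to $w(t,x) := -u(-4-t, x)$, which satisfies Inequality~\eqref{eq:mainPDEdist} with $A$ replaced by $-A$ (of the same $L^\infty$-norm) and with the same $L^m$-norm of the right-hand side, and for which $|\{w \leq 0\} \cap Q_2| = |\{u \geq 0\} \cap Q_2| \geq |Q_2|/2$, so $w$ falls into the first case. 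The upper-bound improvement for $w$, translated back, becomes a lower-bound improvement for $u$ on an early-time cylinder of the form $[-4, -4+\tau] \times B_\rho$; Proposition~\ref{th:flowforward}, applied with $\lambda := \lambda^*$ and the appropriate time shift (its hypothesis $\Lambda_0 \leq (\lambda^*)^2 K_0$ is exactly the assumption of the Oscillation Lemma), then propagates this lower bound to all later times, yielding $u \geq -2 + (\lambda^*)^2/2$ on $[T^*, 0] \times B_{r^*}$ with $r^* = \lambda^*/2$.

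The main case (upper-bound) is handled by iteration. Define heights $a_k := 2(1-2^{-k})$ (so $a_0 = 0$, $a_k \nearrow 2$) and scaled functions $u_k := 2^k(u - a_k)$, invoking Lemma~\ref{th:scaling2} as needed to preserve the form of the PDE. At each step $k$, I check whether $|\{u_k \geq 1\} \cap \overline{Q}_2| < \delta_0$: if so, Proposition~\ref{th:DG1} applied to $u_k - 1$ gives $u_k \leq 3/2$ on $Q_1$, which translates to $u \leq 2 - 2^{-k-1}$ on the corresponding cylinder, and the iteration terminates. Otherwise, Proposition~\ref{th:DG2} applies (the hypothesis $|\{u_k \leq 0\} \cap Q_2| \geq |Q_2|/2$ being maintained inductively using the measure increment $|\{0 < u_k < 1\} \cap Q_2| \geq \mu_0$, since $\{u_k \leq 1\}$ corresponds to $\{u_{k+1} \leq 0\}$) and yields $|\{u_{k+1} \leq 0\} \cap Q_2| \geq |\{u_k \leq 0\} \cap Q_2| + \mu_0$. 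Since $|Q_2|$ is finite, the iteration must terminate within $K \leq |Q_2|/(2\mu_0)$ steps, and the improvement $(\lambda^*)^2 := 2^{-K}$ depends only on the stated parameters.

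The main obstacle is the tension between the two ways of achieving the required PDE scaling at each iteration step. Directly setting $u_k := 2^k(u-a_k)$ preserves the spatial domain but modifies $\Lambda$ to $\Lambda_k = 2^{k(p-1)}\Lambda$, so the De~Giorgi constants $\delta_0$ and $\mu_0$ depend on $k$, and $K$ must then be determined implicitly by an inequality like $K\mu_0(\Lambda_K) \geq |Q_2|/2$; using Lemma~\ref{th:scaling2} preserves the constants but zooms into strictly smaller sub-cylinders of $Q_2$ at each step, where the inductive measure hypothesis of Proposition~\ref{th:DG2} is no longer automatic. Harmonizing these two effects — so that the measure-gain from Proposition~\ref{th:DG2} outpaces both the constants' degradation and the domain shrinkage — is presumably the ``slightly non-standard'' step mentioned in the text preceding the lemma, and is where I expect the bulk of the real work to lie.
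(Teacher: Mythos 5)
Your overall architecture (sign dichotomy plus time reversal, iteration of Propositions~\ref{th:DG1} and~\ref{th:DG2} with heights $2(1-2^{-k})$, then Proposition~\ref{th:flowforward} to propagate the early-time lower bound) is the same as the paper's, but the step you explicitly defer --- reconciling the amplification $u_k=2^k(u-a_k)$ with the uniformity of $\delta_0$ and $\mu_0$ --- is precisely the content of the proof, so as written there is a genuine gap. The paper's resolution rests on an observation you already have but do not exploit: the number of iterations is bounded \emph{a priori}, before any rescaling is performed, by $k_0$, the smallest integer exceeding $|Q_2|/\mu_0$ (if the good event $|\{v_k\geq 1\}\cap\overline{Q}_2|\leq\delta_0$ failed for every $k\leq k_0$, the measure increments from Proposition~\ref{th:DG2} would exceed $|Q_2|$, a contradiction; monotonicity of the two measure conditions in $k$ makes this argument clean). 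Knowing $k_0$ in advance, one applies Lemma~\ref{th:scaling2} a \emph{single} time at the outset with the fixed zoom $\beta=2^{k_0/e_2}$, and performs the mostly-negative/mostly-positive dichotomy on the correspondingly shrunken cylinder $Q_{\mathrm{small}}=[-4\cdot 2^{k_0 e_1/e_2},0]\times B_{2\cdot 2^{k_0/e_2}}$ rather than on $Q_2$, so that after the zoom the hypothesis $|\{v\leq 0\}\cap Q_2|\geq |Q_2|/2$ holds on the \emph{fixed} cylinder $Q_2$. The condition $\alpha\leq\beta^{e_2}=2^{k_0}$ in Lemma~\ref{th:scaling2} then covers every amplification $\alpha=2^k$, $k\leq k_0$, simultaneously, and since that lemma returns a solution of \eqref{eq:mainPDEdist} on the \emph{same} cylinder with the same $\Lambda$ and with $\|A'\|_\infty\leq\|A\|_\infty$, $\|f'\|_m\leq\|f\|_m$, each $v_k=2^k(v-2)+2$ satisfies \eqref{eq:mainPDEdist} on $Q_3$ with undegraded constants. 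Thus $\delta_0$ and $\mu_0$ never change along the iteration, and no further zooming (hence no loss of the Proposition~\ref{th:DG2} measure hypothesis) occurs --- exactly the harmonization you flagged as missing.

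Two smaller points. First, your dichotomy is taken on $Q_2$ before any rescaling; a fraction-$\geq\frac12$ bound on $Q_2$ does not transfer to a zoomed-in sub-cylinder, which is why the paper states the dichotomy on $Q_{\mathrm{small}}$ from the start. Second, the final constants need a little care: Proposition~\ref{th:flowforward} requires $\lambda\leq K_0$, so one takes $\lambda^*=\min\bigl(K_0,\sqrt{2^{-k_0-1}}\bigr)$ rather than simply $(\lambda^*)^2=2^{-K}$, and $r^*$ must be the minimum of $\lambda^*/2$ and the zoomed spatial radius $2^{k_0/e_2}$, with $T^*$ determined by the zoomed time scale; this bookkeeping is where the stated dependence of $\lambda^*$, $r^*$, $T^*$ on $e_1$, $e_2$, $K_0$ enters.
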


The idea of the proof is to apply De Giorgi's First Lemma to some truncation of $u$.  Remember that De Giorgi's First Lemma says that if the measure of $\{u_+>0\}$ is sufficiently small, then $u_+$ is $L^\infty$-bounded on some smaller domain.  This $L^\infty$ bound is precisely what we wish to prove.  We attempt to apply the lemma to each of $(u-C_k)_+$ for $C_k$ an increasing series of constants.  Obviously the measure shrinks as $C_k$ increases; De Giorgi's Second Lemma allows us to quantify the decrease in measure, and find a precise $k$ for which De Giorgi's First Lemma applies.  

\begin{proof}
Let $k_0$ be the smallest integer greater than $|Q_2|/\mu_0$,
where $\mu_0$ is the constant in Proposition~\ref{th:DG2}, and define 
\[ Q_\textrm{small} := [-4 \cdot 2^{k_0 e_1 / e_2},0] \times B_{2 \cdot 2^{k_0 / e_2}}. \]

There are two cases to consider: either we will upper-bound the supremum or we will lower-bound the infimum of $u$ in the region $[T^\ast,0] \times B_{r^\ast}(0)$.  If
\[ |\{u \leq 0\} \cap Q_\textrm{small}| \geq \frac{|Q_\textrm{small}|}{2}, \]
we are in the former case, so we call $u$ ``mostly negative'' and define 
\[ v(t,x) := u(2^{k_0 e_1/e_2} t , 2^{k_0 / e_2} x). \]
Otherwise, we are in the latter case, so we call $u$ ``mostly positive'' and define
\[ v(t,x) := - u(2^{k_0 e_1 / e_2} (-4 - t) , 2^{k_0 / e_2} x). \]
In either case,
\[ |\{v \leq 0\} \cap Q_2| \geq \frac{Q_2}{2}. \]

For integers $k \in [0,k_0]$ consider the functions 
\[ v_k = 2^k (v - 2) + 2.\]  
Notice that for all $k \leq k_0$, $v_k \leq 2$ on $Q_3$.  By Lemma~\ref{th:scaling2} with $\alpha = 2^k$ and $\beta = 2^{k_0 / e_2}$ and domain $Q_3$, combined with the fact that Inequality~\eqref{eq:mainPDEdist} is preserved by translations, addition of constants, and the transformation $f(t,x) \mapsto -f(-t,x)$, each $v_k$ satisfies Inequality~\eqref{eq:mainPDEdist} on $Q_3$.  

We claim that $|\{v_{k_0} \geq 1\} \cap \overline{Q}_2|\leq \delta_0$.  If this were not the case, then in fact
\[ |\{v_k \geq 1\} \cap \overline{Q}_2| > \delta_0,\]
for all $k \leq k_0$, because the quantity is non-increasing as $k$ increases.  Similarly, 
\[ |\{v_k \leq 0\} \cap Q_2| \geq \frac{|Q_2|}{2}\]
for all $k \leq k_0$, because the same holds for $v_0$ and the quantity is non-decreasing.  

This is enough for us to apply De Giorgi's Second Lemma to each $v_k$.  By construction, the Lemma tells us that
\[ |\{v_{k+1} \geq 0\} \cap Q_2| \leq |\{v_k \geq 0\} \cap Q_2| - \mu_0.\]  

This cannot possibly be true for all $k$ between 0 and $k_0$, since $k_0 \mu_0 > |Q_2|$.  This is a contradiction.  

Therefore $|\{v_{k_0} \geq 1\} \cap \overline{Q}_2|\leq \delta_0$.  We can apply De Giorgi's First Lemma to $v_{k_0}-1$, and learn that $v_{k_0} \leq 3/2$ on $Q_1$.  In terms of $v$,  
\[ v(t,x) \leq 2 - 2^{-k_0-1} \qquad \forall \, (t,x) \in Q_1. \]

In the case that $u$ is mostly negative, this means
\[ u(t,x) \leq 2 - 2^{-k_0-1} \qquad \forall \, (t,x) \in [T,0] \times B_{r}(0), \qquad T= -2^{k_0 e_1 / e_2}, \,\, r = 2^{k_0 / e_2} \]
and the proof is complete.  So consider the case where $u$ is mostly positive.  We've shown that
\[ u \geq -2 + 2^{-k_0-1} \qquad \forall (t,x) \in [-4 \cdot 2^{k_0 e_1 / e_2}, -3 \cdot 2^{k_0 e_1 / e_2}]\times B_r.\]
The problem here is the time interval; we want a lower bound on the infimum of $u$ in a parabolic neighborhood of $(0,0)$.  Define 
\[ \lambda^* = \min(K_0, \sqrt{2^{-k_0-1}}). \]
Proposition~\ref{th:flowforward} applied to the lower-semicontinuous envelope of $u$ tells us that, since we assumed $\Lambda_0 \leq (\lambda^*)^2 K_0$,  
\[ u \geq -2 + \frac{(\lambda^*)^2}{2} \qquad \textrm{on } [4T,0] \times B_{\lambda^*/2}.\] 

Letting $T^* = T$, $r^* = \min(r, \lambda^*/2)$, we see that either
\[ \sup_{[T^*,0]\times B_{r^*}(0)} u \leq 2 - \frac{(\lambda^*)^2}{2}\]
or
\[ \inf_{[T^*,0]\times B_{r^*}(0)} u \geq -2 + \frac{(\lambda^*)^2}{2}.\]

\end{proof}

Finally, we are ready to prove Theorem~\ref{th:main}.  

\begin{proof}
Instead of proving continuity directly for $u$, it is preferable to consider
\[ \bar{u} \equiv u + \Lambda t,\]
which satisfies the Inequalities~\eqref{eq:mainPDEdist} and \eqref{eq:mainPDEvisc}.  Clearly $\bar{u}$ and $u$ will have the same H\"{o}lder exponent.  

Since $\bar{\Omega}$ is compact, there is a radius $\rho$ such that $B_{\rho}(x) \subseteq \Omega$ for each $x \in \bar{\Omega}$.  


Consider any two points $(t_0,x_0), (t_1,x_1) \in (s,T)\times\bar{\Omega}$, and assume wlog that $t_0 \geq t_1$.  If these points are far away, then we can estimate the H\"{o}lder norm in a very rough way, using the $L^\infty$ norm of $\bar{u}$.  If the points are very close together, then we must use the Oscillation Lemma.  

We want to rescale the function $\bar{u}$ to obtain $w$ centered at $(t_0,x_0)$ but solving the PDE on $Q_3$, with $\norm{w}_\infty \leq 2$, and with $\Lambda_0 \leq (\lambda^*)^2 K_0$.  To that end, choose $\alpha_w,\beta_w$ small enough that 
\[ \alpha_w \leq \frac{2}{\norm{\bar{u}}_{L^\infty([T,0]\times \Omega)}}, \qquad 3\beta_w \leq \rho, \qquad 4 \alpha_w^{p-1}\beta_w^p \leq s, \qquad \alpha_w^{p-1} \beta_w^{p-2} \Lambda_0 \leq (\lambda^*)^2 K_0, \]
and
\[\alpha_w\beta_w \leq 1, \qquad \alpha_w^{p-1}\beta_w^{p-2} \leq 1, \qquad \alpha^{p(m-1)+1} \beta^{p(m-1)-n} \leq 1.\]
Note that $\alpha_w$ and $\beta_w$ depend on $\norm{u}_{L^\infty}$.  

Lemma~\ref{th:scaling1} tells us that
\[ w(t,x) := \alpha_w \bar{u}\paren{t_0 + \alpha_w^{p-1}\beta_w^p t, x_0 + \beta_w x} \]
is a solution to Inequalities~\eqref{eq:mainPDEdist} and \eqref{eq:mainPDEvisc} on $Q_3$, with $\Lambda_0 \leq (\lambda^\ast)^2 K_0$.  By construction $|w|\leq 2$ on $Q_3$.  

Now that $w$ is formatted correctly, the plan is to apply Lemma~\ref{th:oscillation} iteratively, showing that the oscillation of $w$ decreases as the distance to $(0,0)$ decreases.  

Set
\[ \alpha_1 = \frac{4}{4-(\lambda^*)^2/2},\]
and take $\beta_1$ small enough that $3 \beta_1 \leq r^*$, and $4\alpha_1^{p-1}\beta_1^p \leq -T^*$, and small enough to satisfy the hypotheses of Lemma~\ref{th:scaling1}.  Define $w_0 = w$ and iteratively define
\[ w_{k+1}(t,x) := \alpha_1 \left[ w_k(\alpha_1^{p-1} \beta_1^p t, \beta_1 x) \pm \frac{(\lambda^*)^2}{4} \right], \]
with $\pm$ chosen as whichever sign minimizes $\norm{w_{k+1}}_{L^\infty(Q_3)}$.  By induction, $|w_k| \leq 2$ on $Q_3$ and $w_k$ solves Inequalities~\eqref{eq:mainPDEdist} and \eqref{eq:mainPDEvisc} on $Q_3$ with $\Lambda_0 \leq (\lambda^\ast)^2 K_0$, and hence satisfies the hypotheses of Lemma~\ref{th:oscillation}.  

Therefore, for all $k \geq 0$, we find that for $Q_k = [-(\alpha_1^{p-1}\beta_1^p)^k,0]\times B_{\beta_1^k}$,
\[ \sup_{Q_k} w(t,x) \, - \, \inf_{Q_k} w(t,x) \leq \frac{1}{\alpha_1^{k-1}} \paren{4-\frac{(\lambda^*)^2}{2}}. \]

Remember that we are trying to bound the H\"{o}lder norm, the quantity
\[ (*) = \frac{|\bar{u}(t_1,x_1) - \bar{u}(t_0,x_0)|}{|(t_0-t_1)^2 + |x_0-x_1|^2|^{\gamma/2}}. \]

If $\sqrt{(t_0-t_1)^2 + |x_0 - x_1|^2} \geq \alpha_w^{p-1} \beta_w^p$, then we can bound
\[ (*) \leq \frac{2 \norm{\bar{u}}_\infty}{( \alpha_w^{p-1}\beta_w^p )^\gamma}. \]

Otherwise, we can use the control on the oscillation of $w$.  Specifically, if
\[ \sqrt{(t_0-t_1)^2 + |x_0-x_1|^2} \leq \alpha_w^{p-1} \beta_w^p (\alpha_1^{p-1}\beta_1^p)^k \]
for any integer $k \geq 0$, then, because $\alpha_w \beta_w \leq 1$ and $\alpha_1 \beta_1 \leq 1$, 
\[ \paren{\frac{t_1-t_0}{\alpha_w^{p-1} \beta_w^p}, \frac{x_1-x_0}{\beta_w}} \in Q_k.\]
Therefore
\[ \abs{ w\paren{\frac{t_1-t_0}{\alpha_w^{p-1} \beta_w^p}, \frac{x_1-x_0}{\beta_w}} - w(0,0) } = \alpha_w \abs{\bar{u}(t_1,x_1) - \bar{u}(t_0,x_0)} \leq \frac{4-\frac{(\lambda^*)^2}{2}}{\alpha_1^{k-1}}. \]
This relationship implies that 
\begin{align*}
|\bar{u}(t_1,x_1) - \bar{u}(t_0,x_0)| &\leq \paren{4-\frac{(\lambda^*)^2}{2}} \bigg/ \paren{ \alpha_w \alpha_1^{ \frac{\log\paren{\sqrt{(t_0-t_1)^2 + |x_0-x_1|^2}\big/(\alpha_w^{p-1} \beta_w^p)}}{\log(\alpha_1^{p-1} \beta_1^p)} - 2} }
\\ &\leq \paren{4-\frac{(\lambda^*)^2}{2}} \frac{\alpha_1^2}{\alpha_w} \alpha_1^\frac{\log(\alpha_w^{p-1} \beta_w^p)}{\log(\alpha_1^{p-1} \beta_1^p)} \sqrt{(t_0-t_1)^2 + |x_0-x_1|^2}^{\paren{\frac{-\log(\alpha_1)}{\log(\alpha_1^{p-1} \beta_1^p)}}}. 
\end{align*}

Hence if 
\[ \gamma = \frac{-\log(\alpha_1)}{\log(\alpha_1^{p-1} \beta_1^p)},\]
then
\[ (*) \leq \paren{4-\frac{(\lambda^*)^2}{2}} \frac{\alpha_1}{\alpha_w} \alpha_1^\frac{\log(\alpha_w^{p-1} \beta_w^p)}{\log(\alpha_1^{p-1} \beta_1^p)}. \]

Note that the bound depends non-linearly on $\alpha_w$ and $\beta_w$, and hence on $\norm{u}_{\infty}$, but $\gamma$ depends only on $n$, $p$, $m$, $\Lambda$, and $\Lambda_0$.  

This completes the proof.

\end{proof}

\bibliography{biblio.bib}
\bibliographystyle{plain}
\end{document}